\newtheorem{theorem}{Theorem}
\newtheorem{lemma}{Lemma}
\newtheorem{cor}{Corollary}
\let\originalforall=\forall
\renewcommand{\forall}{\mathop{\vcenter{\hbox{\Large$\originalforall$}}}}
\let\originalexists=\exists
\renewcommand{\exists}{\mathop{\vcenter{\hbox{\Large$\originalexists$}}}}
\title[Inversion in algebras of integrable functions]{Inversion problem in algebras of integrable functions with summable Fourier transforms}
\author{Przemysław Ohrysko}
\address{University of Warsaw, Institute of Mathematics, Banacha 2A, 02-097 Warsaw, Poland}
\email{p.ohrysko@gmail.com}
\subjclass[2020]{Primary 46J05}
\keywords{Invertibility, inversion problem, commutative Banach algebras}
\begin{document}
\maketitle
\begin{abstract}
    In this paper, we study the norm-controlled inversion problem in two classes of algebras of integrable functions. In contrast of the classical case of $L^{1}(G)$, we prove that this problem has a positive solution in our setting without any additional restrictions.
\end{abstract}
\section{Introduction}

\subsection{Preliminaries in Banach algebra theory}

Let $A$ be a commutative complex Banach algebra with identity $\mathbf{1}$.
The \emph{maximal ideal space} (or \emph{Gelfand space}) of $A$ is denoted by $\Delta(A)$ and consists of all
nonzero continuous homomorphisms $\varphi:A\to\mathbb C$.
For $a\in A$ the \emph{Gelfand transform} is the function
\[
\widehat a:\Delta(A)\to\mathbb C,\qquad \widehat a(\varphi)=\varphi(a).
\]
The \emph{spectrum} of $a$ is
\[
\sigma_{A}(a)=\{\lambda\in\mathbb C:\ a-\lambda\mathbf{1}\ \text{is not invertible in }A\}.
\]
In the commutative setting one has $\sigma_{A}(a)=\widehat a(\Delta(A))$, and the \emph{spectral radius}
is given by $r(a)=\lim_{n\to\infty}\|a^{n}\|^{1/n}$.

If $A$ is a (commutative) Banach algebra without identity, its standard unitization is
$A^{1}=A\oplus\mathbb C\mathbf{1}$ with multiplication
\[
(a,\lambda)\cdot(b,\mu)=(ab+\lambda b+\mu a,\lambda\mu),
\]
and a norm equivalent to $\|(a,\lambda)\|=\|a\|+|\lambda|$.
It is worth mentioning that if $A$ is a commutative Banach algebra, then the Gelfand space of $A^{1}$ is the one-point compactification of $\Delta(A)$ and the multiplicative-linear functional $\varphi_{\infty}$ corresponding to the point at infinity is defined by the formula $\varphi_{\infty}(a+\lambda\mathbf{1})=\lambda$.

\subsection{Controlled inversion problem}

A recurring theme in harmonic analysis and the theory of Banach algebras is the
\emph{invertibility} of elements and, more quantitatively, \emph{norm-controlled inversion}:
given a Banach algebra $A$ (typically commutative) and a distinguished “visible” part $X$
of the maximal ideal space, one seeks bounds of the form
\[
\|f^{-1}\|_{A}\ \le\ \Phi(\delta)
\quad\text{whenever}\quad
\|f\|_{A}\le 1,\ \ \inf_{\varphi\in X}|\widehat f(\varphi)|\ge \delta,
\]
where $\widehat f$ is the Gelfand transform.
In his paper \cite{Nikolski1999}, Nikolski developed a systematic approach to such questions,
with an emphasis on the interplay between the \emph{visible spectrum} (detected by $\widehat f|_{X}$)
and the possible presence of an \emph{invisible part} of the spectrum that may still influence
quantitative resolvent bounds and the solvability of B\'ezout equations.

The present project aims at developing an analogous quantitative theory for the Fourier-type algebras
$A_{p}(G)$ and $L_{p}(G)$ (for compact abelian group $G$)
and for their \emph{unitizations} $A_{p}(G)^{1}=A_{p}(G)\oplus \mathbb C\mathbf{1}$, $L_{p}(G)^{1}=L_{p}(G)\oplus\mathbb C$.
Here $G$ is a locally compact abelian (LCA) group; the Banach algebras $A_{p}(G)$ will be defined later in Section~\ref{sec:ApG}.
Following \cite{Nikolski1999}, for a chosen $X\subset \Delta(A)$ we define the \emph{norm-controlled inversion constant}
\[
c_{1}^{A,X}(\delta)=
\sup\Bigl\{\|f^{-1}\|_{A}:\ f\in A\ \text{invertible},\ \|f\|_{A}\le 1,\
\inf_{\varphi\in X}|\widehat f(\varphi)|\ge \delta\Bigr\}.
\]
The main problem is to determine for which $\delta>0$ the above quantity is finite.
\subsection{Banach algebras $A_{p}(G)$}\label{sec:ApG}

Let $G$ be an LCA group and let $\widehat G$ denote its Pontryagin dual.
For $1\le p<\infty$ Larsen--Liu--Wang \cite{LarsenLiuWang1964} consider the space
\[
A_{p}(G)=\bigl\{f\in L^{1}(G):\ \widehat f\in L^{p}(\widehat G)\bigr\},
\qquad
\|f\|_{A_{p}}:=\|f\|_{1}+\|\widehat f\|_{p}.
\]
Here $\widehat f$ is the Fourier transform on $G$ (in the standard Pontryagin duality normalization).
This definition makes $A_{p}(G)$ a natural “Fourier-regular” ideal inside $L^{1}(G)$.

The basic result is that $A_{p}(G)$ is stable under convolution and complete with respect to the above norm.

\begin{theorem}[Theorem~3 in \cite{LarsenLiuWang1964}]
Let $1\le p<\infty$. Then $A_{p}(G)$ is a Banach algebra under convolution.
\end{theorem}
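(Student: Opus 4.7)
The plan is to verify the two non-trivial Banach algebra axioms: submultiplicativity of the norm under convolution and completeness. Linearity and the module/ideal structure inside $L^{1}(G)$ are routine.

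For submultiplicativity, I would begin from the identity $\widehat{f\ast g}=\widehat f\,\widehat g$, valid for $f,g\in L^{1}(G)$ by the classical convolution theorem on LCA groups. Combined with the Riemann--Lebesgue type bound $\|\widehat h\|_{\infty}\le\|h\|_{1}$, this yields
\[
\|\widehat{f\ast g}\|_{p}
=\|\widehat f\,\widehat g\|_{p}
\le\|\widehat f\|_{\infty}\|\widehat g\|_{p}
\le\|f\|_{1}\|\widehat g\|_{p}.
\]
Together with the standard estimate $\|f\ast g\|_{1}\le\|f\|_{1}\|g\|_{1}$ this gives
\[
\|f\ast g\|_{A_{p}}
=\|f\ast g\|_{1}+\|\widehat{f\ast g}\|_{p}
\le\|f\|_{1}\bigl(\|g\|_{1}+\|\widehat g\|_{p}\bigr)
\le\|f\|_{A_{p}}\|g\|_{A_{p}},
\]
so in particular $A_{p}(G)$ is closed under convolution and the norm is submultiplicative.

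For completeness, let $(f_{n})\subset A_{p}(G)$ be a Cauchy sequence in the $A_{p}$-norm. Then $(f_{n})$ is Cauchy in $L^{1}(G)$ and $(\widehat{f_{n}})$ is Cauchy in $L^{p}(\widehat G)$, so by completeness of $L^{1}$ and $L^{p}$ there exist $f\in L^{1}(G)$ and $g\in L^{p}(\widehat G)$ with $f_{n}\to f$ in $L^{1}$ and $\widehat{f_{n}}\to g$ in $L^{p}$. To close the argument I would identify $g$ with $\widehat f$: since $\|\widehat{f_{n}}-\widehat f\|_{\infty}\le\|f_{n}-f\|_{1}\to 0$, the sequence $\widehat{f_{n}}$ converges uniformly to $\widehat f$, while a subsequence of $\widehat{f_{n}}$ converges to $g$ almost everywhere on $\widehat G$; hence $\widehat f=g$ a.e., so $f\in A_{p}(G)$ and $\|f_{n}-f\|_{A_{p}}\to 0$.

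The main step requiring care is this last identification of the two limits, because convergence in $L^{1}$ and in $L^{p}$ a priori live in different function spaces; the reconciliation uses the continuity of the Fourier transform $L^{1}(G)\to L^{\infty}(\widehat G)$ on one side and a.e.\ convergence along a subsequence on the other. Everything else amounts to standard manipulations, so no deeper harmonic-analytic input (e.g.\ Hausdorff--Young or Plancherel) is needed for this basic theorem.
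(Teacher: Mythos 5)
Your proof is correct. The paper itself does not prove this statement --- it is quoted verbatim from Larsen--Liu--Wang --- and your argument is exactly the standard one used there: submultiplicativity follows from $\widehat{f\ast g}=\widehat f\,\widehat g$ together with $\|\widehat f\|_{\infty}\le\|f\|_{1}$ and $\|f\ast g\|_{1}\le\|f\|_{1}\|g\|_{1}$, and completeness follows by taking the two limits in $L^{1}(G)$ and $L^{p}(\widehat G)$ and reconciling them through uniform convergence of $\widehat{f_{n}}$ to $\widehat f$ plus a.e.\ convergence of a subsequence to the $L^{p}$-limit. The one step you rightly flag as needing care (identifying the two limits) is handled correctly, so nothing is missing.
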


In particular, for $f,g\in A_{p}(G)$ one has $f*g\in A_{p}(G)$ and the norm is submultiplicative.

A second fact is the identification of the maximal ideal space of $A_{p}(G)$ with the dual group $\widehat G$.
Consequently, the Gelfand transform is (canonically) the Fourier transform.

\begin{theorem}[Theorem~4 in \cite{LarsenLiuWang1964}]
Let $1\le p<\infty$. The maximal ideal space of $A_{p}(G)$ can be canonically identified with $\widehat G$.
Under this identification, for $f\in A_{p}(G)$ the Gelfand transform coincides with $\widehat f$ on $\widehat G$.
\end{theorem}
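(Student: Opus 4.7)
My plan is to combine the classical Gelfand identification $\Delta(L^{1}(G))\cong\widehat G$ with the fact that $A_{p}(G)$ embeds continuously in $L^{1}(G)$ (indeed $\|f\|_{1}\le\|f\|_{A_{p}}$ by definition) as a dense ideal. First, for each $\gamma\in\widehat G$, the formula $\varphi_{\gamma}(f):=\widehat f(\gamma)$ defines a nonzero continuous homomorphism on $A_{p}(G)$: continuity follows from $|\widehat f(\gamma)|\le\|f\|_{1}\le\|f\|_{A_{p}}$, multiplicativity from the convolution theorem, and $\varphi_{\gamma}\neq 0$ because one can exhibit $f\in A_{p}(G)$ with $\widehat f(\gamma)\neq 0$. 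The map $\gamma\mapsto\varphi_{\gamma}$ is injective because the Fourier transform on $L^{1}(G)$ already separates points of $\widehat G$.

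For the converse direction, I would first verify that $A_{p}(G)$ is a dense ideal in $L^{1}(G)$. For $h\in L^{1}(G)$ and $g\in A_{p}(G)$ one has $h*g\in L^{1}(G)$ with $\widehat{h*g}=\widehat h\cdot\widehat g\in L^{p}(\widehat G)$ (since $\widehat h\in L^{\infty}(\widehat G)$), so $h*g\in A_{p}(G)$ and $A_{p}(G)$ is an ideal. Density follows by approximating any $h\in L^{1}(G)$ by $h*k_{\alpha}$ for an approximate identity $(k_{\alpha})$ whose Fourier transforms are compactly supported, since then $\widehat{h*k_{\alpha}}$ is bounded and compactly supported on $\widehat G$, hence in $L^{p}(\widehat G)$. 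Now, given $\varphi\in\Delta(A_{p}(G))$, choose any $g\in A_{p}(G)$ with $\varphi(g)\neq 0$ and define
\[
\tilde\varphi(h):=\frac{\varphi(h*g)}{\varphi(g)}\qquad (h\in L^{1}(G)).
\]
Using multiplicativity of $\varphi$ together with the ideal property, one checks that $\tilde\varphi$ is independent of the choice of $g$, multiplicative, and continuous on $L^{1}(G)$, with $\tilde\varphi|_{A_{p}(G)}=\varphi$. Applying the classical identification $\Delta(L^{1}(G))\cong\widehat G$ then yields $\tilde\varphi=\varphi_{\gamma}$ for a unique $\gamma\in\widehat G$, hence $\varphi=\varphi_{\gamma}$ on $A_{p}(G)$.

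The main obstacle I expect is the bookkeeping in the extension step, in particular showing that $\tilde\varphi$ is genuinely well-defined (independent of $g$) and multiplicative on all of $L^{1}(G)$, not merely on the product $L^{1}(G)\cdot A_{p}(G)$; this is a standard but slightly delicate computation using the associativity and commutativity of convolution. The final claim, that the Gelfand transform of $f\in A_{p}(G)$ coincides with $\widehat f$ on $\widehat G$, is automatic from the definition $\varphi_{\gamma}(f)=\widehat f(\gamma)$. If one wants a genuine topological identification with the Gelfand topology on $\Delta(A_{p}(G))$, it suffices to note that in both descriptions the relevant topology is the topology of pointwise convergence of Fourier transforms, which coincides with the usual topology on $\widehat G$.
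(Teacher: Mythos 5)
This statement is quoted by the paper from Larsen--Liu--Wang (their Theorem~4) and is not proved in the text, so there is no internal argument to compare against; judged on its own, your proof is correct and is the standard ``dense ideal'' (abstract Segal algebra) argument. The two computations that carry it both check out: the estimate $\|h*g\|_{A_p}\le \|h\|_{1}\|g\|_{1}+\|\widehat h\|_{\infty}\|\widehat g\|_{p}\le \|h\|_{1}\|g\|_{A_p}$ shows $A_p(G)$ is an ideal on which convolution by $h\in L^{1}(G)$ acts boundedly, and the identity $\varphi(h*g)\varphi(g')=\varphi(h*g*g')=\varphi(h*g')\varphi(g)$ gives well-definedness and multiplicativity of $\tilde\varphi$, whose continuity on $L^{1}(G)$ follows from the same estimate; the reduction to $\Delta(L^{1}(G))\cong\widehat G$ is therefore legitimate, and the statement about the Gelfand transform is immediate. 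Three small points deserve one more line each. (i) To see that $\varphi_{\gamma}\neq 0$ and that distinct $\gamma$ give distinct functionals you need functions \emph{in} $A_{p}(G)$ (not merely in $L^{1}(G)$) with prescribed values at given points of $\widehat G$; these are supplied by the functions with bounded, compactly supported Fourier transform that you already invoke for density, since such functions lie in $A_{p}(G)$ for every $p$. (ii) The density step rests on the existence of approximate identities $(k_{\alpha})$ in $L^{1}(G)$ with $\widehat{k_{\alpha}}$ compactly supported (generalized Fej\'er kernels); this is standard but is the one nontrivial input, so cite or prove it. (iii) For the topological identification, continuity of each $\widehat f$ gives one direction, but a priori the Gelfand topology induced by $A_{p}(G)$ on $\widehat G$ is coarser than the one induced by $L^{1}(G)$; upgrade it by an $\varepsilon/3$ argument, approximating $h\in L^{1}(G)$ by $h*k\in A_{p}(G)$ in $\|\cdot\|_{1}$ and using $\|\widehat h-\widehat{h*k}\|_{\infty}\le\|h-h*k\|_{1}$, which reduces the claim to the classical fact for $L^{1}(G)$. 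With these touches the argument is complete and belongs to the same circle of ideas as the original Larsen--Liu--Wang proof, which likewise exploits that $A_{p}(G)$ sits inside $L^{1}(G)$ as a dense ideal.
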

Since in $A^{1}$ we have $\sigma(x)=\widehat{x}(\Delta(A^{1}))$ the following Corollary is an obvious consequence of the last theorem and the Riemann-Lebesgue lemma.
\begin{cor}
   Let $G$ be a non-discrete LCA group and let $f=\alpha\mathbf{1}+f$, where $\alpha\in\mathbb{C}$ and $f\in A_{p}(G)$ satisfy:
    \begin{equation*}
        |\widehat{x}(\gamma)|=|\alpha+\widehat{f}(\gamma)|\geq\delta>0\text{ for every }\gamma\in\widehat{G},
    \end{equation*}
    then $f$ is invertible in the unitization of $A_{p}(G)$.
\end{cor}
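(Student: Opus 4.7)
The plan is to show that $0\notin\sigma(x)$ for $x=\alpha\mathbf{1}+f\in A_{p}(G)^{1}$, from which invertibility follows at once since $A_{p}(G)^{1}$ is unital and commutative. The first step is to write down the spectrum using the material recalled just before the statement: $\Delta(A_{p}(G)^{1})$ is the one-point compactification of $\Delta(A_{p}(G))\cong\widehat G$, and the extra functional is $\varphi_{\infty}(\alpha\mathbf{1}+f)=\alpha$. Combined with the identification of the Gelfand transform on $A_{p}(G)$ with the Fourier transform, this gives
\[
\sigma(x)=\widehat x\bigl(\Delta(A_{p}(G)^{1})\bigr)=\{\alpha+\widehat f(\gamma):\gamma\in\widehat G\}\cup\{\alpha\}.
\]

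The hypothesis $|\alpha+\widehat f(\gamma)|\ge\delta$ directly takes care of every $\varphi\in\widehat G\subset\Delta(A_{p}(G)^{1})$, so the only remaining value to control is $\widehat x(\varphi_{\infty})=\alpha$. For this I would invoke the Riemann--Lebesgue lemma: since $f\in A_{p}(G)\subset L^{1}(G)$ and $G$ is non-discrete (so $\widehat G$ is non-compact), the Fourier transform $\widehat f$ lies in $C_{0}(\widehat G)$. Choosing any net $\gamma_{\iota}\to\infty$ in $\widehat G$ then gives $\widehat f(\gamma_{\iota})\to 0$, and passing to the limit in the inequality $|\alpha+\widehat f(\gamma_{\iota})|\ge\delta$ yields $|\alpha|\ge\delta$.

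Putting the two bounds together, $|\widehat x(\varphi)|\ge\delta>0$ for every $\varphi\in\Delta(A_{p}(G)^{1})$, so $0\notin\sigma(x)$ and $x$ is invertible in $A_{p}(G)^{1}$. I do not foresee a genuine obstacle here; the only point requiring care is treating $\varphi_{\infty}$ separately from the characters in $\widehat G$, and this is precisely the place where the non-discreteness assumption on $G$ (equivalently, the non-compactness of $\widehat G$) is used to make Riemann--Lebesgue produce a useful limit.
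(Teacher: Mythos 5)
Your argument is correct and is exactly the route the paper intends: identify $\sigma(x)$ with the image of $\widehat x$ on the one-point compactification of $\widehat G$, use the hypothesis on the characters, and use the Riemann--Lebesgue lemma (valid since $\widehat G$ is non-compact) to get $|\alpha|\ge\delta$ at the point at infinity, so $0\notin\sigma(x)$. The paper leaves this as an ``obvious consequence'' of its Theorem~2 and Riemann--Lebesgue, and your write-up simply makes that same argument explicit.
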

\section{Main results for $A_{p}(G)$}

Let $A=A_{p}(G)$ and let $A^{1}=A_{p}(G)^{1}$ be its unitization. Note that $A_{p}(G)$ is non-unital unless $G$ is discrete and therefore the norm-controlled inversion problem is naturally set in $A^{1}$. The case of discrete $G$ is of independent interest, as for $G=\mathbb{Z}$ we obtain the famous Wiener algebra $A(\mathbb{T})$ with a different (but equivalent) norm. This was one of the main problems studied in \cite{OW2020} extending the results from \cite{Nikolski1999}. Throughout the paper, we assume that $G$ is non-discrete.
\subsection{Initial information}

Since $G$ is non-discrete, $\widehat G$ is non-compact, so the
Riemann--Lebesgue lemma applies: for every $h\in L^{1}(G)$ one has
$\widehat h\in C_{0}(\widehat G)$.

Let $x=\lambda\mathbf 1+a\in A_{p}(G)^{1}$ with $a\in A_{p}(G)\subset L^{1}(G)$.
Then $\widehat x(\gamma)=\lambda+\widehat a(\gamma)$ for $\gamma\in\widehat G$, and $\widehat a\in C_{0}(\widehat G)$.
Hence for every $\varepsilon>0$ there exists $\gamma\in\widehat G$ with $|\widehat a(\gamma)|<\varepsilon$, and therefore
\[
|\widehat x(\gamma)|\ge |\lambda|-|\widehat a(\gamma)|>|\lambda|-\varepsilon.
\]
Consequently, if $\inf_{\gamma\in\widehat G}|\widehat x(\gamma)|\ge \delta$, letting $\varepsilon\downarrow 0$ yields
\[
|\lambda|\ge \delta.
\]
Once $|\lambda|\ge\delta>1/2$ and $\|x\|_{A^{1}}\le 1$ are known, Nikolski's splitting lemma yields the norm control (see \cite[Lemma~1.4.3]{Nikolski1999}):

\begin{theorem}
If $x=\lambda\mathbf 1+f\in A^{1}$ satisfies
$1/2<\delta\leq |\lambda|\leq \|x\|\leq 1$, then $x$ is invertible and $\|x^{-1}\|_{A^{1}}\le (2\delta-1)^{-1}$.
\end{theorem}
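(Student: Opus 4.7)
The plan is to factor out the scalar part and invert the remaining perturbation of the identity by a Neumann series. Write $x=\lambda(\mathbf 1+\lambda^{-1}f)$. Since $x$ is assumed to have norm at most $1$ in the unitization, and the standard unitization norm gives $\|x\|_{A^{1}}=\|f\|_{A}+|\lambda|$, we immediately obtain the key estimate $\|f\|_{A}\le 1-|\lambda|$.

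From here I would divide by $|\lambda|$ to get $\|\lambda^{-1}f\|_{A}\le(1-|\lambda|)/|\lambda|$, and observe that the hypothesis $|\lambda|>1/2$ is exactly what makes this quantity strictly less than $1$. Hence the Neumann series
\[
(\mathbf 1+\lambda^{-1}f)^{-1}=\sum_{n=0}^{\infty}(-\lambda^{-1}f)^{n}
\]
converges absolutely in $A^{1}$ (using that $A^{1}$ is a unital Banach algebra with $\|\mathbf 1\|=1$), producing an inverse of $\mathbf 1+\lambda^{-1}f$ and therefore of $x$ itself.

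The norm bound then follows from summing the geometric series: one estimates
\[
\|x^{-1}\|_{A^{1}}\le|\lambda|^{-1}\cdot\frac{1}{1-\|f\|_{A}/|\lambda|}=\frac{1}{|\lambda|-\|f\|_{A}}\le\frac{1}{2|\lambda|-1}\le\frac{1}{2\delta-1},
\]
where in the second inequality I replace $\|f\|_{A}$ by its upper bound $1-|\lambda|$, and in the last step I use the monotonicity in $|\lambda|$ together with $|\lambda|\ge\delta$.

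There is essentially no obstacle here: the proof is a direct application of the Neumann series trick, and the whole content of the hypothesis $\delta>1/2$ is that it forces $\|\lambda^{-1}f\|_{A}<1$. The only thing worth double-checking is the precise form of the norm on $A^{1}$ (the paper states only that it is equivalent to $\|f\|+|\lambda|$), but for the quantitative constant $(2\delta-1)^{-1}$ to come out cleanly one should work with the genuine norm $\|f\|+|\lambda|$, which is submultiplicative and has $\|\mathbf 1\|=1$, so the Neumann estimates are sharp.
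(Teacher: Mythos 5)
Your proof is correct and is essentially the argument behind the result the paper invokes (Nikolski's splitting lemma): factor out the scalar part, use $\|f\|_{A}\le 1-|\lambda|$ and $|\lambda|>1/2$ to make $\|\lambda^{-1}f\|_{A}<1$, and sum the Neumann series to get $\|x^{-1}\|_{A^{1}}\le(2|\lambda|-1)^{-1}\le(2\delta-1)^{-1}$. Your remark about using the genuine unitization norm $\|f\|_{A}+|\lambda|$ (submultiplicative, $\|\mathbf 1\|=1$) is exactly the right caveat, and it is the norm used throughout the paper, so nothing further is needed.
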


In particular, for non-discrete $G$, if $x\in A_{p}(G)^{1}$ satisfies $\|x\|_{A^{1}}\le 1$ and
$\inf_{\gamma\in\widehat G}|\widehat x(\gamma)|\ge \delta>1/2$, then $\|x^{-1}\|_{A^{1}}\le (2\delta-1)^{-1}$.
\\
Quite surprisingly, considering for a moment the unitization of $L^{1}(G)$ this result is the best possible - the norm-controlled inversion does not hold for $\delta\leq\frac{1}{2}$ (check Corollary 3.2.5 in \cite{Nikolski1999}). In the next few subsections we will prove that the norm of the inverse in the unitization of $A_{p}(G)$ can be bounded by a function depending only on $\delta$ for every $\delta>0$ which illustrates a difference between these two algebras in spite of their superficial similarity. Our main tool from Fourier analysis is the theorem of Hausdorff and Young which holds for every locally compact abelian group, but we state it for compact abelian groups as it will be our setting.

\begin{theorem}[Hausdorff--Young inequalities]\label{thm:HY}
Assume that $G$ is a compact abelian group and that Haar measure is normalized so that $m(G)=1$.
Let $\widehat G$ be the discrete dual group equipped with counting measure.
\begin{enumerate}
\item[(i)]  If $1\le p\le 2$ and $f\in L^{p}(G)$, then its Fourier transform
$\widehat f=(\widehat f(\gamma))_{\gamma\in\widehat G}$ belongs to $\ell^{p'}(\widehat G)$ and
\[
\|\widehat f\|_{\ell^{p'}(\widehat G)}\le \|f\|_{L^{p}(G)},
\qquad \frac1p+\frac1{p'}=1.
\]
\item[(ii)] (Inverse Hausdorff--Young.) If $1\le r\le 2$ and $b=(b(\gamma))_{\gamma\in\widehat G}\in \ell^{r}(\widehat G)$,
then the Fourier series
\[
(\mathcal F^{-1}b)(t):=\sum_{\gamma\in\widehat G} b(\gamma)\,\gamma(t)
\]
converges in $L^{r'}(G)$ (where $1/r+1/r'=1$) and satisfies
\[
\|\mathcal F^{-1}b\|_{L^{r'}(G)}\le \|b\|_{\ell^{r}(\widehat G)}.
\]
\end{enumerate}
\end{theorem}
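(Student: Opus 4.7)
The plan is to prove both parts by interpolating between the trivial $L^{1}$--$L^{\infty}$ estimate and the Plancherel isometry.

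For part (i), I would first establish the two endpoint inequalities. At $p=1$ the bound is immediate from the definition of the Fourier transform: for $f\in L^{1}(G)$ and any $\gamma\in\widehat G$, $|\widehat f(\gamma)|=\bigl|\int_{G}f(t)\overline{\gamma(t)}\,dm(t)\bigr|\le\|f\|_{L^{1}(G)}$, so $\|\widehat f\|_{\ell^{\infty}(\widehat G)}\le\|f\|_{L^{1}(G)}$. At $p=2$ the bound is Plancherel's theorem for compact abelian groups: under the chosen normalization of Haar measure on $G$ and counting measure on $\widehat G$, the Fourier transform is an isometry $L^{2}(G)\to\ell^{2}(\widehat G)$. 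With both endpoints in hand, I would apply the Riesz--Thorin interpolation theorem to the linear map $\mathcal F\colon f\mapsto\widehat f$, viewed as acting between the complex interpolation scales $L^{p}(G)$ and $\ell^{p'}(\widehat G)$. Since the operator norm at each endpoint equals $1$, interpolation yields the stated bound with constant $1$ for every $1\le p\le 2$.

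For part (ii), I would run the dual argument for the formal inverse Fourier transform $\mathcal F^{-1}b(t)=\sum_{\gamma}b(\gamma)\gamma(t)$. For finitely supported sequences $b$ this is a genuine trigonometric polynomial, and one has the two endpoint estimates
\[
\|\mathcal F^{-1}b\|_{L^{\infty}(G)}\le\|b\|_{\ell^{1}(\widehat G)},\qquad \|\mathcal F^{-1}b\|_{L^{2}(G)}=\|b\|_{\ell^{2}(\widehat G)},
\]
the first from the triangle inequality ($|\gamma(t)|=1$) and the second again from Plancherel. Riesz--Thorin then gives, for every finitely supported $b$,
\[
\|\mathcal F^{-1}b\|_{L^{r'}(G)}\le\|b\|_{\ell^{r}(\widehat G)},\qquad 1\le r\le 2.
\]

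To finish part (ii), I would extend this estimate from finitely supported sequences to arbitrary $b\in\ell^{r}(\widehat G)$ by a density argument: for $b\in\ell^{r}(\widehat G)$, take an exhausting sequence of finite subsets $F_{n}\subset\widehat G$ and set $b_{n}=b\mathbf{1}_{F_{n}}$. Then $b_{n}\to b$ in $\ell^{r}(\widehat G)$, and the Riesz--Thorin bound applied to differences shows that $(\mathcal F^{-1}b_{n})$ is Cauchy in $L^{r'}(G)$; its limit provides the unambiguous definition of $\mathcal F^{-1}b$ and inherits the norm estimate by continuity. The main subtlety, and essentially the only one, is this convergence step for $1<r<2$, because the series $\sum_{\gamma}b(\gamma)\gamma(t)$ need not converge pointwise or unconditionally in any obvious sense; interpreting it as a norm limit in $L^{r'}(G)$ via the density argument resolves the issue cleanly and simultaneously proves the inequality.
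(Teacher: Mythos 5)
Your interpolation argument is correct and is exactly the standard proof: the paper offers no proof of Theorem~\ref{thm:HY}, deferring to \cite{Folland2016} and \cite{Katznelson2004}, and those references establish both inequalities just as you do, by Riesz--Thorin interpolation between the trivial $L^{1}$--$\ell^{\infty}$ (resp.\ $\ell^{1}$--$L^{\infty}$) endpoint and the Plancherel isometry, the endpoint bounds being available because $m(G)=1$ gives $L^{p}(G)\subset L^{1}(G)$. The only cosmetic point is in part (ii): if $\widehat G$ is uncountable there is no exhausting sequence of finite subsets of $\widehat G$, but a sequence $b\in\ell^{r}(\widehat G)$ has countable support, so you should exhaust $\{\gamma:\,b(\gamma)\neq 0\}$ instead; the same estimate applied to differences then shows the partial sums over finite sets form a Cauchy net, giving unconditional convergence of the series in $L^{r'}(G)$ together with the norm bound.
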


\noindent
Both estimates are standard; see, for example, \cite[Theorem~4.28]{Folland2016} for the forward inequality on locally compact abelian groups and \cite{Katznelson2004} for the compact abelian case (Fourier series).

\subsection{Bounds for every $\delta>0$ when $1\le p\le 2$}

Assume now that $G$ is \emph{compact} and normalize Haar measure by $m(G)=1$.
We are going to prove the following theorem.
\begin{theorem}\label{pierw}
    Let $G$ be a compact abelian group and let $x\in A_{p}(G)$ with $p\in [1,2]$ satisfy $\|x\|_{A^{1}}\leq 1$ and $|\widehat{x}(\gamma)|\geq\delta>0$ for every $\gamma\in\widehat{G}$. Then 
    \begin{equation*}
        \|x^{-1}\|_{A^{1}}\leq\frac{1}{\delta}+\frac{2(1-\delta)}{\delta^{2}}.
    \end{equation*}
\end{theorem}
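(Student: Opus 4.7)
The plan is to construct the inverse explicitly on the Fourier side and then transfer the integrability information back to the group side by means of the inverse Hausdorff--Young inequality.

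Write $x=\lambda\mathbf 1+f$ with $f\in A_{p}(G)$. The initial information preceding the theorem already gives, from $|\widehat x(\gamma)|\ge\delta$ and the Riemann--Lebesgue lemma, the estimate $|\lambda|\ge\delta$; combined with $\|x\|_{A^{1}}\le 1$ (interpreted as $|\lambda|+\|f\|_{1}+\|\widehat f\|_{p}\le 1$) this also yields $\|\widehat f\|_{\ell^{p}(\widehat G)}\le 1-|\lambda|$. Since $|\widehat x(\gamma)|\ge\delta$ on the whole of $\widehat G$, I would define
\[
h(\gamma):=-\frac{\widehat f(\gamma)}{\lambda\,\widehat x(\gamma)},\qquad \gamma\in\widehat G,
\]
and observe the pointwise bound $|h(\gamma)|\le|\widehat f(\gamma)|/\delta^{2}$, so that $h\in\ell^{p}(\widehat G)$ with $\|h\|_{\ell^{p}}\le(1-|\lambda|)/\delta^{2}$.

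The heart of the proof is to realize $h$ as the Fourier transform of an element of $A_{p}(G)$. Here I would invoke Theorem \ref{thm:HY}(ii): because $p\in[1,2]$ and $h\in\ell^{p}(\widehat G)$, the series $g:=\mathcal F^{-1}h$ converges in $L^{p'}(G)$ and satisfies $\|g\|_{L^{p'}}\le\|h\|_{\ell^{p}}$. The compactness of $G$ (normalized so $m(G)=1$) then gives the continuous inclusion $L^{p'}(G)\hookrightarrow L^{1}(G)$ with $\|g\|_{1}\le\|g\|_{p'}$, so $g\in L^{1}(G)$; since by construction $\widehat g=h\in\ell^{p}(\widehat G)$, we in fact have $g\in A_{p}(G)$ and
\[
\|g\|_{A_{p}}=\|g\|_{1}+\|\widehat g\|_{\ell^{p}}\le\frac{2(1-|\lambda|)}{\delta^{2}}.
\]

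To conclude, I would set $y:=(1/\lambda)\mathbf 1+g\in A_{p}(G)^{1}$ and check that $yx=\mathbf 1$: by construction $\widehat y(\gamma)\widehat x(\gamma)=1$ for every $\gamma\in\widehat G$, and also $\varphi_{\infty}(y)\varphi_{\infty}(x)=(1/\lambda)\lambda=1$; hence $\widehat{yx-\mathbf 1}$ vanishes on all of $\Delta(A^{1})$, and injectivity of the Fourier transform on $L^{1}(G)$ upgrades this to $yx=\mathbf 1$. The claimed inequality then follows from
\[
\|x^{-1}\|_{A^{1}}\le\frac{1}{|\lambda|}+\|g\|_{A_{p}}\le\frac{1}{|\lambda|}+\frac{2(1-|\lambda|)}{\delta^{2}},
\]
upon noting that the right-hand side is decreasing in $|\lambda|\in[\delta,1]$ and is therefore maximized at $|\lambda|=\delta$. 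The only step with genuine content is the middle one: it is exactly the inverse Hausdorff--Young inequality, combined with compactness of $G$, that promotes the formal candidate $h$ for $\widehat{x^{-1}}$ from a mere $\ell^{p}$-sequence to the Fourier transform of a bona fide element of $A_{p}(G)$. This mechanism is precisely what is missing for $L^{1}(G)^{1}$ and explains why we can break through Nikolski's $\delta=1/2$ barrier in the present setting.
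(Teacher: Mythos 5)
Your proposal is correct and follows essentially the same route as the paper: the key steps -- the pointwise bound $|\widehat{g}(\gamma)|\le|\widehat f(\gamma)|/\delta^{2}$ for the Fourier transform of the inverse, the inverse Hausdorff--Young inequality for $p\in[1,2]$, and the embedding $L^{p'}(G)\hookrightarrow L^{1}(G)$ on the compact group -- are exactly those of the paper's proof. The only (harmless) difference is packaging: the paper takes invertibility of $x$ for granted from the earlier corollary and estimates the given inverse, whereas you construct $y=(1/\lambda)\mathbf 1+\mathcal F^{-1}h$ explicitly and verify $yx=\mathbf 1$ via injectivity of the Fourier transform, which makes the argument self-contained but is otherwise the same estimate.
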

\begin{proof}
Let $1\le p\le 2$ and write $x=\lambda\mathbf 1+f\in A_{p}(G)^{1}$ with $f\in A_{p}(G)$ and $\|x\|_{A^{1}}\leq 1$.
If
\[
\inf_{\gamma\in\widehat G}|\widehat x(\gamma)|\ge \delta>0,
\]
then by the Riemann--Lebesgue lemma $\widehat f\in c_{0}(\widehat G)$ and hence $|\lambda|\ge \delta$ (as above).
Let $y=\frac{1}{\lambda}\mathbf{1}+g$ with $g\in A_{p}(G)$ be the inverse of $x$. Then
\begin{align*}
    \widehat{g}(\gamma)
    &=\frac{1}{\lambda+\widehat{f}(\gamma)}-\frac{1}{\lambda}
     =-\frac{\widehat{f}(\gamma)}{\lambda(\lambda+\widehat{f}(\gamma))}
    \qquad \text{for every }\gamma\in\widehat{G}.
\end{align*}
It follows that $|\widehat{g}(\gamma)|\leq\delta^{2}|\widehat{f}(\gamma)|$ which gives 
\begin{equation}\label{pros}
    \|\widehat{g}\|_{\ell p}\leq \frac{\|\widehat{f}\|_{\ell p}}{\delta^{2}}\leq\frac{1-\delta}{\delta^{2}}
\end{equation}
as $|\lambda|+\|f\|_{L^{1}(G)}+\|\widehat{f}\|_{\ell p}\leq 1$.
\\
By the Hausdorff-Young inequality $g\in L^{q}(G)$ where $\frac{1}{p}+\frac{1}{q}=1$ with $\|g\|_{L^{q}(G)}\leq \|\widehat{g}\|_{\ell p}$. Since $G$ is compact and $m(G)=1$ we get by H\"older inequality and (\ref{pros}):
\begin{equation*}
    \|g\|_{L^{1}(G)}\leq \|g\|_{L^{q}(G)}\leq\|\widehat{g}\|_{\ell p}\leq\frac{1-\delta}{\delta^{2}}.
\end{equation*}
Finally,
\begin{align*}
    \|y\|_{A^{-1}}
    &=\frac{1}{|\lambda|}+\|g\|_{A_{p}(G)}
     =\frac{1}{|\lambda|}+\|g\|_{L^{1}(G)}+\|\widehat g\|_{\ell^{p}(\widehat G)}\\
    &\le \frac{1}{\delta}+\frac{1-\delta}{\delta^{2}}+\frac{1-\delta}{\delta^{2}}
     =\frac{1}{\delta}+\frac{2(1-\delta)}{\delta^{2}}.
\end{align*}
\end{proof}

\subsection{A $p>2$ extension for $\delta>1/3$}

Extending Theorem \ref{pierw} to $p>2$ is non-trivial as there is no variant of Hausdorff-Young inequality for this range. The idea of the next proof is to raise an element $f\in A_{p}(G)$ to sufficiently large power $n$ to get $f^{\ast n}\in A_{q}(G)$ where $q\leq 2$. Next, some algebra is involved since we have to deal with the sums of the form $x=\lambda\mathbf{1}+f$, for which the power trick does not work immediately. In view of Theorem \ref{thm:remove-delta-third} the following result is superfluous, but we keep it as it does not utilize the involution and therefore might be useful in other context.
\begin{theorem}\label{thm:pgt2-delta-third}
Let $G$ be a compact abelian group, let $p>2$, and let $x=\lambda\mathbf 1+f\in A_{p}(G)^{1}$ with
$\|x\|_{A^{1}}\le 1$. Assume that
\[
\inf_{\gamma\in\widehat G}|\widehat x(\gamma)|\ge \delta
\qquad (\gamma\in\widehat G)
\]
for some $\delta>1/3$. Let $n\ge 1$ be an odd integer such that $q:=p/n\le 2$, and set
\[
r:=\frac{1-\delta}{2\delta}\in(0,1),\qquad 
\eta_{n}:=1-r^{n}.
\]
Then $x$ is invertible in $A^{1}$ and one has the explicit estimate
\begin{equation}\label{eq:pgt2-explicit-bound}
\|x^{-1}\|_{A^{1}}
\le
\left(
\delta^{-n}+\frac{2(1-\delta)^{n}}{\delta^{2n}\eta_{n}}
\right),
\end{equation}
\end{theorem}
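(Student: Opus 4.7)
The plan is to reduce the $p>2$ case to the already-established Theorem \ref{pierw} by passing to the $n$-fold convolution power $f^{*n}$, whose Fourier transform lies in $\ell^{q}(\widehat G)$ with $q=p/n\le 2$, where Hausdorff--Young is available. The algebraic device is the factorization
\[
x\cdot Q=\lambda^{n}\mathbf 1+f^{*n},\qquad
Q:=\sum_{j=0}^{n-1}(-\lambda)^{j}f^{*(n-1-j)},
\]
which is the commutative Banach-algebra version of the identity $a^{n}+b^{n}=(a+b)\sum_{j=0}^{n-1}(-1)^{j}a^{n-1-j}b^{j}$ (valid for odd $n$), applied with $a=f$ and $b=\lambda\mathbf 1$. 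Once $\lambda^{n}\mathbf 1+f^{*n}$ is shown to be invertible inside $A_{q}(G)^{1}\subset A_{p}(G)^{1}$, the identity forces $x$ to be invertible with $x^{-1}=Q\cdot(\lambda^{n}\mathbf 1+f^{*n})^{-1}$, and the norm bound factors as a product.

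The pivotal pointwise estimate is
\[
|\widehat f(\gamma)|\le\tfrac12(1-|\lambda|)\quad\text{for every }\gamma\in\widehat G,
\]
obtained by averaging the two standard inequalities $|\widehat f(\gamma)|\le\|f\|_{L^{1}}$ and $|\widehat f(\gamma)|\le\|\widehat f\|_{\ell^{\infty}}\le\|\widehat f\|_{\ell^{p}}$ and using $\|f\|_{L^{1}}+\|\widehat f\|_{\ell^{p}}\le 1-|\lambda|$. Dividing by $|\lambda|\ge\delta$ yields $|\widehat f(\gamma)/\lambda|\le r=(1-\delta)/(2\delta)$, and the hypothesis $\delta>1/3$ is precisely what makes $r<1$. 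Consequently
\[
|\lambda^{n}+\widehat f(\gamma)^{n}|\ge|\lambda|^{n}(1-r^{n})=|\lambda|^{n}\eta_{n}\ge\delta^{n}\eta_{n}>0.
\]
Since $f^{*n}\in A_{q}(G)$ with $q\le 2$, I can rerun the computation of Theorem \ref{pierw} on $\lambda^{n}\mathbf 1+f^{*n}$ inside $A_{q}(G)^{1}$: writing the inverse as $\lambda^{-n}\mathbf 1+h$ with $\widehat h=-\widehat f^{\,n}/(\lambda^{n}(\lambda^{n}+\widehat f^{\,n}))$, Hausdorff--Young together with H\"older (using $m(G)=1$) gives
\[
\|h\|_{L^{1}(G)}\le\|\widehat h\|_{\ell^{q}}\le\frac{\|\widehat f\|_{\ell^{p}}^{\,n}}{|\lambda|^{2n}\eta_{n}}\le\frac{(1-\delta)^{n}}{\delta^{2n}\eta_{n}},
\]
and $\|\widehat h\|_{\ell^{p}}\le\|\widehat h\|_{\ell^{q}}$ since $A_{q}\hookrightarrow A_{p}$ is norm-decreasing on the Fourier side, so
\[
\bigl\|(\lambda^{n}\mathbf 1+f^{*n})^{-1}\bigr\|_{A^{1}}\le\delta^{-n}+\frac{2(1-\delta)^{n}}{\delta^{2n}\eta_{n}}.
\]

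It remains to verify $\|Q\|_{A^{1}}\le 1$. Isolating the scalar summand (since $n$ is odd, the $j=n-1$ term contributes $\lambda^{n-1}\mathbf 1$) one writes $Q=\lambda^{n-1}\mathbf 1+\sum_{j=0}^{n-2}(-\lambda)^{j}f^{*(n-1-j)}$, and a term-by-term estimate using submultiplicativity $\|f^{*k}\|_{A_{p}}\le\|f\|_{A_{p}}^{k}\le(1-|\lambda|)^{k}$ gives
\[
\|Q\|_{A^{1}}\le\sum_{j=0}^{n-1}|\lambda|^{j}(1-|\lambda|)^{n-1-j}\le(|\lambda|+(1-|\lambda|))^{n-1}=1,
\]
the middle inequality being just $\binom{n-1}{j}\ge 1$. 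Multiplying the two displayed bounds produces \eqref{eq:pgt2-explicit-bound}. The main obstacle is the pointwise estimate on $|\widehat f|$: a naive use of $\|\widehat f\|_{\ell^{p}}\le 1-|\lambda|$ alone gives only $|\widehat f(\gamma)/\lambda|\le(1-\delta)/\delta=2r$, which is $<1$ only when $\delta>1/2$; exploiting the simultaneous control of $|\widehat f|$ by both $\|f\|_{L^{1}}$ and $\|\widehat f\|_{\ell^{p}}$ is what provides the factor of $\tfrac12$ and pushes the threshold down to $\delta>1/3$.
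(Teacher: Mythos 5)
Your proposal is correct and follows essentially the same route as the paper: the averaged bound $\|\widehat f\|_{\infty}\le\tfrac12(1-|\lambda|)$ giving $r<1$ for $\delta>1/3$, the odd-power element $\lambda^{n}\mathbf 1+f^{*n}\in A_{q}(G)^{1}$ with $q=p/n\le 2$, the factor $Q$ of norm at most $1$, and the embedding $\ell^{q}\subset\ell^{p}$. If anything, your explicit rerun of the computation from Theorem~\ref{pierw} (using $\|\widehat f\|_{\ell^{p}}^{n}\le(1-\delta)^{n}$ together with the pointwise lower bound $|\lambda|^{2n}\eta_{n}$) is exactly what is needed to produce the stated constant $\delta^{-n}+2(1-\delta)^{n}/(\delta^{2n}\eta_{n})$, a detail the paper's Step~3 only gestures at by citing Theorem~\ref{pierw} as a black box.
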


\begin{proof}
Write $x=\lambda\mathbf 1+f$ with $f\in A_{p}(G)$ and set $u:=\widehat f\in \ell^{p}(\widehat G)$.
Since $f\in L^{1}(G)$ and $G$ is compact, the Riemann--Lebesgue lemma yields $u\in c_{0}(\widehat G)$.

\smallskip
\noindent\emph{Step 1: lower bounds for $|\lambda|$ and a uniform bound for $u$.}
From $\inf_{\gamma}|\lambda+u(\gamma)|\ge \delta$ and $u(\gamma)\to 0$ we get $|\lambda|\ge \delta$.
Moreover, using the triangle inequality and the normalization $\|x\|_{A^{1}}\le 1$ one obtains
\[
\|u\|_{\infty}\le \frac{1-\delta}{2}.
\]
(In particular, for $\delta>1/3$ we have $r=\|u\|_{\infty}/|\lambda|\le (1-\delta)/(2\delta)<1$.)

\smallskip
\noindent\emph{Step 2: odd powers and reduction to an exponent $\le 2$.}
Let $n$ be an odd number such that $q:=\frac{p}{n}\le 2$. Set
\[
y_{n}:=\lambda^{n}\mathbf 1+f^{*n}\in A_{q}(G)^{1},\qquad q:=\frac{p}{n}\le 2.
\]
We also record the corresponding \emph{norm} estimate, which will be needed to apply Theorem \ref{pierw}.
Since $q=p/n<p$ and $\widehat G$ is discrete (because $G$ is compact), we have the continuous embedding
$\ell^{q}(\widehat G)\subset \ell^{p}(\widehat G)$ and the norm inequality $\|a\|_{\ell^{p}(\widehat G)}\le \|a\|_{\ell^{q}(\widehat G)}$.
As $\widehat{f^{*n}}=u^{n}$, we have
\begin{align*}
\|f^{*n}\|_{A_{q}(G)}
&=\|f^{*n}\|_{L^{1}(G)}+\|u^{n}\|_{\ell^{q}(\widehat G)}\\
&\le \|f\|_{L^{1}(G)}^{n}+\|u\|_{\ell^{p}(\widehat G)}^{n}\\
&\le \bigl(\|f\|_{L^{1}(G)}+\|u\|_{\ell^{p}(\widehat G)}\bigr)^{n}
= \|f\|_{A_{p}(G)}^{n}.
\end{align*}
Hence
\begin{align*}
\|y_{n}\|_{A_{q}(G)^{1}}
&=|\lambda|^{n}+\|f^{*n}\|_{A_{q}(G)}\\
&\le |\lambda|^{n}+\|f\|_{A_{p}(G)}^{n}\\
&\le (|\lambda|+\|f\|_{A_{p}(G)})^{n}
= \|x\|_{A_{p}(G)^{1}}^{n}
\le 1.
\end{align*}

Then $\widehat y_{n}(\gamma)=\lambda^{n}+u(\gamma)^{n}$ and, since $|u(\gamma)|\le \|u\|_{\infty}\le r|\lambda|$,
\[
|\widehat y_{n}(\gamma)|\ge |\lambda|^{n}\bigl(1-r^{n}\bigr)=|\lambda|^{n}\eta_{n}.
\]
Hence $y_{n}$ satisfies the hypotheses of the $q\le 2$ case (Hausdorff--Young), which we now implement directly.

\smallskip
\smallskip
\noindent\emph{Step 3: inversion of $y_{n}$ via Theorem \ref{pierw}.}
Since $q\le 2$, we may apply Theorem \ref{pierw} to the element $y_{n}\in A_{q}(G)^{1}$.
The estimates from Step~2 show that $\|y_{n}\|_{A_{q}(G)^{1}}\le 1$ and
\[
\inf_{\gamma\in\widehat G}|\widehat y_{n}(\gamma)|\ge \delta^{n}\eta_{n}.
\]
Hence $y_{n}$ is invertible in $A_{q}(G)^{1}$ and Theorem~4 provides an explicit bound for $\|y_{n}^{-1}\|_{A_{q}(G)^{1}}$
in terms of $\delta^{n}\eta_{n}$.
In addition,
\[
\|y_{n}^{-1}\|_{A_{p}(G)^{1}}\le \|y_{n}^{-1}\|_{A_{q}(G)^{1}},
\]
so any bound obtained for $\|y_{n}^{-1}\|_{A_{q}(G)^{1}}$ also controls $\|y_{n}^{-1}\|_{A_{p}(G)^{1}}$.

\smallskip
\noindent\emph{Step 4: recovering $x^{-1}$.}
For odd $n$ the identity
\[
(\lambda\mathbf 1+f)*Q_{n}(f)=\lambda^{n}\mathbf 1+f^{*n}=y_{n}
\]
holds, where
\[
Q_{n}(f):=\sum_{k=0}^{n-1}(-1)^{k}\lambda^{\,n-1-k}f^{*k}.
\]
Consequently,
\[
x^{-1}=Q_{n}(f)*y_{n}^{-1}
\quad\text{and}\quad
\|x^{-1}\|_{A_{p}(G)^{1}}\le \|Q_{n}(f)\|_{A_{p}(G)^{1}}\,\|y_{n}^{-1}\|_{A_{p}(G)^{1}}.
\]

\smallskip
\noindent\emph{Estimate for $\|Q_{n}(f)\|_{A_{p}(G)^{1}}$.}
By submultiplicativity of the norm in $A_{p}(G)^{1}$ we have
\[
\|Q_{n}(f)\|_{A_{p}(G)^{1}}
\le \sum_{k=0}^{n-1}|\lambda|^{\,n-1-k}\,\|f\|_{A_{p}(G)}^{k}
\le (|\lambda|+\|f\|_{A_{p}(G)})^{n-1}
=\|x\|_{A_{p}(G)^{1}}^{\,n-1}
\le 1,
\]
where in the last step we used the normalization $\|x\|_{A_{p}(G)^{1}}\le 1$.
Combining this with the bound for $\|y_{n}^{-1}\|_{A_{p}(G)^{1}}$ obtained in Step~3 and the identity
$x^{-1}=Q_{n}(f)*y_{n}^{-1}$ gives the announced estimate for $\|x^{-1}\|_{A_{p}(G)^{1}}$.

This yields \eqref{eq:pgt2-explicit-bound}.

\end{proof}

\subsection{The general case}

Dropping the assumption $\delta>\frac{1}{3}$ requires a symmetrization trick. Recall that $L^{1}(G)$ possesses a symmetric involution given by 
\begin{equation*}
    f^{\ast}(x)=\overline{f(-x)}\text{ for }x\in G\text{ and }f\in L^{1}(G).
\end{equation*}
The same formula works in $A_{p}(G)$. The strategy of the proof of the next theorem is to consider an element $a:=x\ast x^{\ast}$ which has non-negative Fourier transform. This solves the problem of the invertibility of the element $y_{n}$ from the proof of Theorem \ref{thm:pgt2-delta-third}.

\begin{theorem}\label{thm:remove-delta-third}
Let $G$ be a compact abelian group, let $p>2$, and let $x=\lambda\mathbf 1+f\in A_{p}(G)^{1}$ satisfy
\[
\|x\|_{A_{p}(G)^{1}}\le 1\qquad\text{and}\qquad\inf_{\gamma\in\widehat G}|\widehat x(\gamma)|\ge \delta>0.
\]
Then $x$ is invertible in $A_{p}(G)^{1}$. More precisely, let $n$ be the smallest odd integer such that $q:=p/n\le 2$, and set
\[
\Delta_{n}:=\delta^{2n}\bigl(1-(1-\delta^{2})^{n}\bigr).
\]
Then
\begin{equation}\label{eq:remove-delta-explicit-bound}
\|x^{-1}\|_{A_{p}(G)^{1}}
\le
\frac{1}{\delta^{2n}}+\frac{2(1-\Delta_{n})}{\Delta_{n}^{2}}.
\end{equation}
\end{theorem}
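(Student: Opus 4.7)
The strategy is to apply the argument of Theorem \ref{thm:pgt2-delta-third} not to $x$ itself but to the self-adjoint symmetrization $a:=x*x^{*}$, whose Gelfand transform $\widehat a=|\widehat x|^{2}$ is real, non-negative, and lies in $[\delta^{2},1]$; this real-valuedness is exactly what dissolves the ``$\delta>1/3$'' restriction of the previous theorem. I would write $a=\mu\mathbf{1}+g$ with $\mu:=|\lambda|^{2}\in[\delta^{2},1]$ and $g:=\lambda f^{*}+\overline{\lambda}f+f*f^{*}\in A_{p}(G)$, and record two elementary observations: (i) $\|a\|_{A_{p}(G)^{1}}\le\|x\|_{A_{p}(G)^{1}}^{2}\le 1$ because the involution is isometric and the norm is submultiplicative, and (ii) $\widehat g=\widehat a-\mu$ is real-valued with $|\widehat g(\gamma)|\le 1-\delta^{2}$.

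Imitating Step~2 of the proof of Theorem \ref{thm:pgt2-delta-third}, with $n$ as in the statement I set $z_{n}:=\mu^{n}\mathbf{1}+g^{*n}\in A_{q}(G)^{1}$; the same submultiplicative/embedding computation yields $\|z_{n}\|_{A_{q}(G)^{1}}\le\|a\|_{A_{p}(G)^{1}}^{n}\le 1$. The crucial step is the pointwise lower bound $|\widehat z_{n}(\gamma)|\ge\Delta_{n}$. Since $n$ is odd and $\widehat g$ is real, $\widehat z_{n}(\gamma)=\mu^{n}+\widehat g(\gamma)^{n}$, and I split on the sign of $\widehat g(\gamma)$. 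If $\widehat g(\gamma)\ge 0$, then $\widehat z_{n}(\gamma)\ge\mu^{n}\ge\delta^{2n}\ge\Delta_{n}$. If $\widehat g(\gamma)<0$, then $|\widehat g(\gamma)|=\mu-\widehat a(\gamma)\le\mu-\delta^{2}$, hence $|\widehat g(\gamma)|/\mu\le 1-\delta^{2}/\mu\le 1-\delta^{2}$, which gives
\[
\widehat z_{n}(\gamma)=\mu^{n}-|\widehat g(\gamma)|^{n}\ge\mu^{n}\bigl(1-(1-\delta^{2})^{n}\bigr)\ge\delta^{2n}\bigl(1-(1-\delta^{2})^{n}\bigr)=\Delta_{n}.
\]

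With $\|z_{n}\|_{A_{q}^{1}}\le 1$ and $|\widehat z_{n}|\ge\Delta_{n}$ in hand, I apply Theorem \ref{pierw} in $A_{q}(G)^{1}$ to $z_{n}$ and trace its proof: the ``$1/|\lambda|$''-term becomes $1/|\mu^{n}|\le 1/\delta^{2n}$ (here I use the sharper lower bound $\mu^{n}\ge\delta^{2n}$, not the weaker $\mu^{n}\ge\Delta_{n}$), whereas the remainder term is bounded by $2(1-\Delta_{n})/\Delta_{n}^{2}$ exactly as in Theorem \ref{pierw} with $\Delta_{n}$ in the role of $\delta$. The odd-$n$ factorization $a*Q_{n}(g)=z_{n}$ with $\|Q_{n}(g)\|_{A_{p}(G)^{1}}\le\|a\|_{A_{p}(G)^{1}}^{n-1}\le 1$, combined with the embedding $\|\cdot\|_{A_{p}^{1}}\le\|\cdot\|_{A_{q}^{1}}$, transfers this to a bound on $\|a^{-1}\|_{A_{p}^{1}}$; and the identity $x^{-1}=x^{*}*a^{-1}$ together with $\|x^{*}\|=\|x\|\le 1$ yields \eqref{eq:remove-delta-explicit-bound}. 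The main obstacle is the pointwise estimate for $|\widehat z_{n}(\gamma)|$: it works uniformly for every $\delta>0$ precisely because $\widehat g$ is real-valued (a consequence of the self-adjointness of $a$), whereas in Theorem \ref{thm:pgt2-delta-third} the analogous bound for $|\lambda^{n}+\widehat f(\gamma)^{n}|$ forced $\|\widehat f\|_{\infty}/|\lambda|<1$, i.e.\ $\delta>1/3$.
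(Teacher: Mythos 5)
Your proof is correct and follows essentially the same route as the paper: symmetrize to $a=x*x^{\ast}$, decompose $a=|\lambda|^{2}\mathbf 1+k$, pass to the odd power $y_{n}=|\lambda|^{2n}\mathbf 1+k^{*n}\in A_{q}(G)^{1}$ with $q=p/n\le 2$, bound $|\widehat y_{n}|\ge\Delta_{n}$ via the real-valuedness of $\widehat k$ and the sign split, invert with Theorem \ref{pierw}, and recover $x^{-1}$ through $Q_{n}$ and $x^{-1}=x^{\ast}*a^{-1}$. Your explicit observation that the first term $1/\delta^{2n}$ (rather than $1/\Delta_{n}$) comes from tracing the proof of Theorem \ref{pierw} with the scalar part $|\lambda|^{2n}\ge\delta^{2n}$ is a correct clarification of a point the paper leaves implicit.
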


\begin{proof}
Write $x=\lambda\mathbf 1+f\in A_{p}(G)^{1}$ and set $u:=\widehat f\in \ell^{p}(\widehat G)$, so that $\widehat x(\gamma)=\lambda+u(\gamma)$ for $\gamma\in\widehat G$.
Since $u(\gamma)\to 0$ as $\gamma\to\infty$ and $\inf_{\gamma}|\lambda+u(\gamma)|\ge \delta$, we have $|\lambda|\ge \delta$.
Also $|\lambda|\le \|x\|_{A_{p}(G)^{1}}\le 1$.

Let $x^{\ast}:=\overline{\lambda}\mathbf 1+f^{\ast}$, where $f^{\ast}(t):=\overline{f(t^{-1})}$.
Then $\widehat{x^{\ast}}(\gamma)=\overline{\widehat x(\gamma)}$ for $\gamma\in\widehat G$.
Set
\[
a:=x*x^{\ast}\in A_{p}(G)^{1}.
\]
For every $\gamma\in\widehat G$ we have
\[
\widehat a(\gamma)=\widehat x(\gamma)\,\overline{\widehat x(\gamma)}=|\widehat x(\gamma)|^{2}\ge \delta^{2}.
\]
Expanding the convolution gives the decomposition
\[
a=|\lambda|^{2}\mathbf 1+k,
\qquad
k:=\lambda f^{\ast}+\overline{\lambda}f+f*f^{\ast}\in A_{p}(G),
\]
and for $\gamma\in\widehat G$,
\[
\widehat k(\gamma)=|\lambda+u(\gamma)|^{2}-|\lambda|^{2}=2\Re(\overline{\lambda}u(\gamma))+|u(\gamma)|^{2}\in\mathbb R.
\]
Moreover, by submultiplicativity in $A_{p}(G)^{1}$,
\[
\|a\|_{A_{p}(G)^{1}}\le \|x\|_{A_{p}(G)^{1}}\|x^{\ast}\|_{A_{p}(G)^{1}}=\|x\|_{A_{p}(G)^{1}}^{2}\le 1,
\]
hence
\[
|\lambda|^{2}+\|k\|_{A_{p}(G)}=\|a\|_{A_{p}(G)^{1}}\le 1.
\tag{A}\label{eq:A}
\]

Now choose $n$ to be the smallest odd integer such that $q:=p/n\le 2$, and define
\[
y_{n}:=|\lambda|^{2n}\mathbf 1+k^{*n}\in A_{q}(G)^{1}.
\]
By \eqref{eq:A} we obtain,
\[
\|y_{n}\|_{A_{q}(G)^{1}}\le (|\lambda|^{2}+\|k\|_{A_{p}(G)})^{n}\le 1.
\]
For $\gamma\in\widehat G$ we have
\[
\widehat y_{n}(\gamma)=|\lambda|^{2n}+(\widehat k(\gamma))^{n}.
\]
Since $\widehat k(\gamma)\in\mathbb R$ and $n$ is odd, the only possible cancellation occurs when $\widehat k(\gamma)<0$.
From $\widehat a(\gamma)=|\lambda|^{2}+\widehat k(\gamma)\ge \delta^{2}$ we get $\widehat k(\gamma)\ge \delta^{2}-|\lambda|^{2}$, hence for $\widehat k(\gamma)<0$,
\[
|\widehat k(\gamma)|=-\widehat k(\gamma)\le |\lambda|^{2}-\delta^{2}.
\]
Therefore, for all $\gamma\in\widehat G$,
\[
|\widehat y_{n}(\gamma)|
\ge
|\lambda|^{2n}-(|\lambda|^{2}-\delta^{2})^{n}
=
|\lambda|^{2n}\Bigl(1-\Bigl(1-\frac{\delta^{2}}{|\lambda|^{2}}\Bigr)^{n}\Bigr).
\]
Using $|\lambda|\ge \delta$ and $|\lambda|\le 1$, we obtain
\[
\inf_{\gamma\in\widehat G}|\widehat y_{n}(\gamma)|
\ge
\delta^{2n}\bigl(1-(1-\delta^{2})^{n}\bigr)
=\Delta_{n}.
\]

Since $q\le 2$, we may apply Theorem~\ref{pierw} to $y_{n}\in A_{q}(G)^{1}$ and obtain that $y_{n}$ is invertible in $A_{q}(G)^{1}$ and
\[
\|y_{n}^{-1}\|_{A_{q}(G)^{1}}
\le
\frac{1}{\delta^{2n}}+\frac{2(1-\Delta_{n})}{\Delta_{n}^{2}}.
\]
As $\ell^{q}(\widehat G)\subset \ell^{p}(\widehat G)$ for $q<p$, we also have $\|y_{n}^{-1}\|_{A_{p}(G)^{1}}\le \|y_{n}^{-1}\|_{A_{q}(G)^{1}}$.

Define
\[
Q_{n}(k):=\sum_{j=0}^{n-1}(-1)^{j}\,|\lambda|^{2(n-1-j)}\,k^{*j}.
\]
As before
\[
(|\lambda|^{2}\mathbf 1+k)*Q_{n}(k)=|\lambda|^{2n}\mathbf 1+k^{*n}=y_{n},
\]
Consequently,
\[
a^{-1}=Q_{n}(k)*y_{n}^{-1}.
\]
Moreover, by \eqref{eq:A} we have $\|Q_{n}(k)\|_{A_{p}(G)^{1}}\le (|\lambda|^{2}+\|k\|_{A_{p}(G)})^{n-1}\le 1$, hence
\[
\|a^{-1}\|_{A_{p}(G)^{1}}\le \|y_{n}^{-1}\|_{A_{p}(G)^{1}}.
\]

Finally, $x^{-1}=x^{\ast}*a^{-1}$ in $A_{p}(G)^{1}$. Therefore,
\[
\|x^{-1}\|_{A_{p}(G)^{1}}
\le
\|x^{\ast}\|_{A_{p}(G)^{1}}\|a^{-1}\|_{A_{p}(G)^{1}}
\le
\|a^{-1}\|_{A_{p}(G)^{1}}
\le
\|y_{n}^{-1}\|_{A_{q}(G)^{1}},
\]
which yields \eqref{eq:remove-delta-explicit-bound}.
\end{proof}

\section{Quantitative inversion in the unitized algebras $L^{p}(G)_{1}$}

Throughout this section $G$ is a fixed compact abelian group with normalized Haar
measure $m(G)=1$. For $1\le p<\infty$ we write $L^{p}(G)_{1}:=\mathbb{C}\,1\oplus
L^{p}(G)$ for the unitization of the convolution algebra $(L^{p}(G),*)$\footnote{$L^{p}(G)$ is a Banach algebra as $\|f\ast g\|_{p}\leq \|f\|_{1}\|g\|_{p}\leq \|f\|_{p}\|g\|_{p}$}, equipped
with the norm
\[
\|\lambda 1+f\|_{L^{p}(G)_{1}}:=|\lambda|+\|f\|_{L^{p}(G)}.
\]
For $\gamma\in\widehat G$ we use the Fourier transform
$\widehat f(\gamma)=\int_{G} f(t)\overline{\gamma(t)}\,dt$, and for $x=\lambda 1+f$
we write $\widehat x(\gamma):=\lambda+\widehat f(\gamma)$.  As before, the
involution is $f^{\ast}(t)=\overline{f(t^{-1})}$ and $(\lambda 1+f)^{\ast}=
\overline\lambda\,1+f^{\ast}$.
\\
Also, we have a simple analogue of Theorems $1$ and $2$.
\begin{theorem}
    Let $1\leq p<\infty$ and let $G$ be a compact abelian group. Then $\Delta(L^{p}(G))=\widehat{G}$ and the Gelfand transform is canonically identified with the Fourier transform. Moreover, if $x=\lambda\mathbf{1}+f$ satisfies 
    \begin{equation*}
        \exists_{\delta>0}\forall_{\gamma\in\widehat{G}}|\widehat{x}(\gamma)|=|\lambda+\widehat{f}(\gamma)|\geq\delta,
    \end{equation*}
    then $x$ is invertible.
\end{theorem}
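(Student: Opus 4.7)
The plan is to establish the identification $\Delta(L^{p}(G))=\widehat{G}$ first, after which the invertibility claim will follow from Gelfand theory applied to the unitization, in direct analogy with the corollary for $A_{p}(G)$ at the end of Section~1.3. For the easy half, each $\gamma\in\widehat{G}$ gives a continuous character $\varphi_{\gamma}(f):=\widehat{f}(\gamma)$ of $L^{p}(G)$: multiplicativity comes from $\widehat{f*g}=\widehat{f}\,\widehat{g}$, while the estimate $|\widehat{f}(\gamma)|\le\|f\|_{L^{1}(G)}\le\|f\|_{L^{p}(G)}$ (using $m(G)=1$ via H\"older) gives continuity, and the assignment $\gamma\mapsto\varphi_{\gamma}$ is clearly injective.

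For the surjectivity, fix $\varphi\in\Delta(L^{p}(G))$. The cornerstone observation is that, under $m(G)=1$, characters are mutually orthogonal convolution idempotents:
\[
\gamma*\eta
=\Bigl(\int_{G}\gamma(s)\overline{\eta(s)}\,ds\Bigr)\eta
=\begin{cases} \gamma & \text{if }\gamma=\eta,\\ 0 & \text{if }\gamma\neq\eta.\end{cases}
\]
Applying $\varphi$ yields $\varphi(\gamma)^{2}=\varphi(\gamma)$ and $\varphi(\gamma)\varphi(\eta)=0$ for $\gamma\neq\eta$, so $\varphi$ takes the value $1$ on at most one character $\gamma_{0}\in\widehat{G}$ and the value $0$ on all the others. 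Since trigonometric polynomials are dense in $L^{p}(G)$ (Stone--Weierstrass on $C(G)$ combined with density of $C(G)$ in $L^{p}(G)$) and $\varphi$ is automatically continuous and nonzero, such a $\gamma_{0}$ must actually exist. For a trigonometric polynomial $f=\sum_{\gamma}c_{\gamma}\gamma$ one then gets $\varphi(f)=c_{\gamma_{0}}=\widehat{f}(\gamma_{0})$, and density plus continuity of both sides extends this identity to every $f\in L^{p}(G)$.

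Once $\Delta(L^{p}(G))=\widehat{G}$ is in place, the invertibility statement becomes formal: the Gelfand space of the unitization $L^{p}(G)_{1}$ is the one-point compactification $\widehat{G}\cup\{\infty\}$ with $\varphi_{\infty}(\lambda\mathbf{1}+f)=\lambda$, and since $f\in L^{p}(G)\subset L^{1}(G)$ (again by $m(G)=1$) the Riemann--Lebesgue lemma forces $|\lambda|\ge\delta$. Combined with the hypothesis this shows that $\widehat{x}$ is bounded below by $\delta$ on the entire maximal ideal space, so $x$ is invertible. The only genuinely delicate step is the surjectivity in the middle paragraph; once the idempotent identity $\gamma*\gamma=\gamma$ is recognised, everything else is extension by density together with a routine application of Gelfand theory.
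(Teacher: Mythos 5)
Your proposal is correct and follows essentially the same route as the paper: both identify the characters through the orthogonality/idempotent relations of the functions $\gamma\in\widehat{G}$ under convolution, use density of trigonometric polynomials in $L^{p}(G)$, and then deduce invertibility from the unitization's Gelfand space together with the Riemann--Lebesgue lemma forcing $|\lambda|\ge\delta$. The only cosmetic difference is that you work with $\Delta(L^{p}(G))$ and extend $\varphi=\varphi_{\gamma_{0}}$ by density, whereas the paper argues directly on $\Delta(L^{p}(G)_{1})$ and uses the eigenvector identity $f*u_{\gamma_{0}}=\widehat{f}(\gamma_{0})u_{\gamma_{0}}$; both are sound.
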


\begin{proof}
For each $\gamma\in\widehat G$ define
\[
\varphi_{\gamma}(\lambda \mathbf{1}+f):=\lambda+\widehat f(\gamma),
\qquad
\varphi_{\infty}(\lambda \mathbf{1}+f):=\lambda .
\]
Since $\widehat{f*g}(\gamma)=\widehat f(\gamma)\widehat g(\gamma)$, each
$\varphi_{\gamma}$ (and $\varphi_{\infty}$) is a continuous multiplicative
functional on $L^{p}(G)_{1}$.

Conversely, let $\varphi\in\Delta(L^{p}(G)_{1})$. For $\gamma\in\widehat G$ set
$u_{\gamma}(t):=\gamma(t)$, so $u_{\gamma}\in L^{p}(G)$ and one has
$u_{\gamma}*u_{\eta}=\delta_{\gamma,\eta}\,u_{\eta}$ and
$f*u_{\gamma}=\widehat f(\gamma)\,u_{\gamma}$ for every $f\in L^{p}(G)$.
Hence $\varphi(u_{\gamma})^{2}=\varphi(u_{\gamma})$, so
$\varphi(u_{\gamma})\in\{0,1\}$ for all $\gamma$.

If $\varphi(u_{\gamma})=0$ for every $\gamma$, then $\varphi$ vanishes on all
trigonometric polynomials (finite linear combinations of $u_{\gamma}$), and by
their density in $L^{p}(G)$ (see e.g.\ \cite[Ch.~I]{Katznelson2004}) we obtain
$\varphi|_{L^{p}(G)}\equiv 0$. Therefore $\varphi=\varphi_{\infty}$.

Otherwise, choose $\gamma_{0}$ with $\varphi(u_{\gamma_{0}})=1$. Then for any
$f\in L^{p}(G)$,
\[
\varphi(f)=\varphi(f)\varphi(u_{\gamma_{0}})
=\varphi(f*u_{\gamma_{0}})
=\varphi(\widehat f(\gamma_{0})u_{\gamma_{0}})
=\widehat f(\gamma_{0}),
\]
and hence $\varphi=\varphi_{\gamma_{0}}$.

Finally, if $x=\lambda\mathbf{1}+f$ satisfies
$|\widehat x(\gamma)|=|\lambda+\widehat f(\gamma)|\ge\delta$ for all
$\gamma\in\widehat G$, then also $|\lambda|\ge\delta$ because
$\widehat f(\gamma)\to 0$ as $\gamma\to\infty$. Thus
$|\varphi(x)|\ge\delta$ for every $\varphi\in\Delta(L^{p}(G)_{1})$, so
$0\notin\sigma(x)$ and $x$ is invertible.
\end{proof}

The two theorems below provide quantitative bounds for $\|x^{-1}\|_{L^{p}(G)_{1}}$
in terms of $p$ and the spectral gap $\delta:=\inf_{\gamma}|\widehat x(\gamma)|$.
The proofs follow the same symmetrization idea used in the proof of
Theorem~\ref{thm:remove-delta-third}: we first pass to $a:=x*x^{\ast}$ so that
$\widehat a(\gamma)=|\widehat x(\gamma)|^{2}$ is real and bounded away from $0$,
then we split $a=|\lambda|^{2}1+k$ and build the auxiliary elements
$y_{n}=|\lambda|^{2n}1+k^{*n}$.

\subsection{The case $1<p\le 2$}
Our strategy for the range $p\in (1,2]$ is the reduction to the case of $p=2$ which by Plancherel theorem is covered in Theorem \ref{pierw}.
\begin{lemma}[Hausdorff--Young reduction to $L^{2}$] \label{lem:HY-L2}
Let $1<p<2$ and $g\in L^{p}(G)$, where $G$ is a compact abelian group equipped with
normalized Haar measure. Set $p'=p/(p-1)$ and choose an integer $m\ge \lceil p'/2\rceil$.
Then $g^{*m}\in L^{2}(G)$ and
\[
\|g^{*m}\|_{2}\le \|g\|_{p}^{m}.
\]
\end{lemma}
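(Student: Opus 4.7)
The plan is to run the reduction entirely on the Fourier side and then invoke Plancherel at the very end. First I would apply the forward Hausdorff--Young inequality of Theorem~\ref{thm:HY}(i) to the single function $g$: since $1<p\le 2$, we have $\widehat g\in \ell^{p'}(\widehat G)$ with $\|\widehat g\|_{\ell^{p'}(\widehat G)}\le \|g\|_{L^{p}(G)}$. This is the only place where the hypothesis $p\le 2$ enters.

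Next I would exploit that convolution is intertwined with pointwise multiplication by the Fourier transform. Because $G$ is compact with $m(G)=1$, we have $L^{p}(G)\subset L^{1}(G)$, so $g^{*m}\in L^{1}(G)$ is well defined and satisfies $\widehat{g^{*m}}=\widehat g^{\,m}$ pointwise on $\widehat G$. The choice $m\ge \lceil p'/2\rceil$ is exactly what makes $2m\ge p'$, and since $\widehat G$ is discrete with counting measure we have the norm-decreasing inclusion $\ell^{p'}(\widehat G)\subset \ell^{2m}(\widehat G)$. Combining these facts,
\[
\|\widehat g^{\,m}\|_{\ell^{2}(\widehat G)}
=\Bigl(\sum_{\gamma\in\widehat G}|\widehat g(\gamma)|^{2m}\Bigr)^{1/2}
=\|\widehat g\|_{\ell^{2m}(\widehat G)}^{m}
\le \|\widehat g\|_{\ell^{p'}(\widehat G)}^{m}
\le \|g\|_{L^{p}(G)}^{m}.
\]

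Finally I would invoke Theorem~\ref{thm:HY}(ii) with $r=2$ (equivalently, Plancherel) applied to the sequence $\widehat g^{\,m}\in \ell^{2}(\widehat G)$: its inverse Fourier transform lies in $L^{2}(G)$ with the same norm, and by uniqueness of the Fourier transform on $L^{1}(G)$ this inverse transform must coincide with $g^{*m}$. Hence $g^{*m}\in L^{2}(G)$ and the displayed chain of inequalities delivers $\|g^{*m}\|_{L^{2}(G)}\le \|g\|_{L^{p}(G)}^{m}$. There is essentially no obstacle in this argument; the only point that deserves a short verification is that the threshold $m\ge \lceil p'/2\rceil$ is exactly the one making the counting-measure embedding $\ell^{p'}\hookrightarrow \ell^{2m}$ a contraction, which is where the bound becomes sharp and avoids any hidden constant.
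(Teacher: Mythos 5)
Your argument is correct and is essentially the paper's own proof: forward Hausdorff--Young for $g$, the norm-decreasing embedding $\ell^{p'}(\widehat G)\subset\ell^{2m}(\widehat G)$ from $2m\ge p'$, the identity $\widehat{g^{*m}}=(\widehat g)^{m}$, and Plancherel to conclude $\|g^{*m}\|_{2}=\|\widehat g\|_{\ell^{2m}}^{m}\le\|g\|_{p}^{m}$. The extra remark identifying the inverse transform of $(\widehat g)^{m}$ with $g^{*m}$ via uniqueness on $L^{1}(G)$ is a harmless elaboration of the same step.
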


\begin{proof}
By the Hausdorff--Young inequality,
$\|\widehat g\|_{\ell^{p'}(\widehat G)}\le \|g\|_{p}$.
Since $2m\ge p'$, we have $\ell^{p'}(\widehat G)\subset \ell^{2m}(\widehat G)$ and hence
$\|\widehat g\|_{\ell^{2m}}\le \|\widehat g\|_{\ell^{p'}}$.
Using Plancherel and the identity $\widehat{g^{*m}}=(\widehat g)^{m}$, we obtain
\[
\|g^{*m}\|_{2}
= \|(\widehat g)^{m}\|_{\ell^{2}}
= \|\widehat g\|_{\ell^{2m}}^{m}
\le \|\widehat g\|_{\ell^{p'}}^{m}
\le \|g\|_{p}^{m}.
\]
\end{proof}

\begin{theorem}[Quantitative inversion in $L^{p}(G)_{1}$ for $1<p\le 2$]\label{lp1}
Fix $1<p\le 2$ and set $p':=\frac{p}{p-1}$. Let $x=\lambda 1+f\in L^{p}(G)_{1}$ satisfy
\[
\|x\|_{L^{p}(G)_{1}}\le 1,
\qquad\text{and}\qquad
\inf_{\gamma\in\widehat G}|\widehat x(\gamma)|\ge \delta>0.
\]
Let $m$ be the smallest odd integer with $m\ge \lceil p'/2\rceil$, and define
\[
\Delta_{m}:=\delta^{2m}\Bigl(1-(1-\delta^{2})^{m}\Bigr).
\]
Then $x$ is invertible in $L^{p}(G)_{1}$ and
\[
\|x^{-1}\|_{L^{p}(G)_{1}}
\le
\frac{1}{\delta^{2m}}+\frac{2(1-\Delta_{m})}{\Delta_{m}^{2}}.
\]
\end{theorem}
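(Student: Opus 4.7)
The plan is to imitate the symmetrization-plus-odd-power argument of Theorem~\ref{thm:remove-delta-third}, replacing the $A_{p}(G)\hookrightarrow A_{q}(G)$ reduction used there with the $L^{p}(G)\to L^{2}(G)$ reduction supplied by Lemma~\ref{lem:HY-L2}. First I would form $a:=x*x^{\ast}$, so that $\widehat{a}(\gamma)=|\widehat{x}(\gamma)|^{2}\ge \delta^{2}$ is real-valued. Writing $a=|\lambda|^{2}\mathbf{1}+k$ with $k:=\lambda f^{\ast}+\overline{\lambda}f+f*f^{\ast}$, Young's inequality on the compact group $G$ gives $k\in L^{p}(G)$, and submultiplicativity of $\|\cdot\|_{L^{p}(G)_{1}}$ delivers the budget estimate
\[
|\lambda|^{2}+\|k\|_{p}=\|a\|_{L^{p}(G)_{1}}\le \|x\|_{L^{p}(G)_{1}}^{2}\le 1.
\]
Since $L^{p}(G)\subset L^{1}(G)$ on compact $G$, Riemann--Lebesgue gives $|\lambda|\ge\delta$, so in addition $\|k\|_{p}\le 1-\delta^{2}$.

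Next, with $m$ the smallest odd integer $\ge \lceil p'/2\rceil$, I would set $y_{m}:=|\lambda|^{2m}\mathbf{1}+k^{*m}$. Lemma~\ref{lem:HY-L2} yields $k^{*m}\in L^{2}(G)$ with $\|k^{*m}\|_{2}\le \|k\|_{p}^{m}$, so $y_{m}\in L^{2}(G)_{1}$ and $\|y_{m}\|_{L^{2}(G)_{1}}\le (|\lambda|^{2}+\|k\|_{p})^{m}\le 1$. Because $m$ is odd and $\widehat{k}\in\mathbb{R}$, the purely pointwise spectral-gap computation from Theorem~\ref{thm:remove-delta-third}---cancellation in $|\lambda|^{2m}+\widehat{k}(\gamma)^{m}$ occurs only when $\widehat{k}(\gamma)<0$, in which case $\widehat{a}\ge\delta^{2}$ forces $|\widehat{k}(\gamma)|\le |\lambda|^{2}-\delta^{2}$---transfers verbatim and gives $\inf_{\gamma}|\widehat{y_{m}}(\gamma)|\ge \Delta_{m}$. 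I would then invert $y_{m}$ directly in $L^{2}(G)_{1}$: writing $y_{m}^{-1}=|\lambda|^{-2m}\mathbf{1}+h$, one has $\widehat{h}(\gamma)=-\widehat{k^{*m}}(\gamma)/(|\lambda|^{2m}\widehat{y_{m}}(\gamma))$, so Plancherel combined with $|\lambda|^{2m}\ge \Delta_{m}$ produces $\|h\|_{2}\le \|k^{*m}\|_{2}/\Delta_{m}^{2}\le (1-\delta^{2})^{m}/\Delta_{m}^{2}$. An elementary check gives $(1-\delta^{2})^{m}\le 1-\Delta_{m}$; the contractive embedding $L^{2}(G)\hookrightarrow L^{p}(G)$, valid because $m(G)=1$ and $p\le 2$, then transfers this bound to $L^{p}(G)_{1}$.

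Recovering $x^{-1}$ is purely algebraic and parallels Theorem~\ref{thm:remove-delta-third}: the polynomial identity $(|\lambda|^{2}\mathbf{1}+k)*Q_{m}(k)=y_{m}$ with $Q_{m}(k):=\sum_{j=0}^{m-1}(-1)^{j}|\lambda|^{2(m-1-j)}k^{*j}$, combined with $\|Q_{m}(k)\|_{L^{p}(G)_{1}}\le (|\lambda|^{2}+\|k\|_{p})^{m-1}\le 1$ and $\|x^{\ast}\|_{L^{p}(G)_{1}}=\|x\|_{L^{p}(G)_{1}}\le 1$, gives $\|x^{-1}\|_{L^{p}(G)_{1}}\le \|y_{m}^{-1}\|_{L^{p}(G)_{1}}$, which sits under the announced bound. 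The main obstacle I anticipate is the tension in the choice of $m$: oddness is needed for the algebraic factorization $a^{m}+b^{m}=(a+b)(\dots)$ with the alternating-sign cofactor $Q_{m}$, and it also shapes the spectral-gap lower bound into the form $\Delta_{m}$; simultaneously $m\ge \lceil p'/2\rceil$ is exactly what makes Lemma~\ref{lem:HY-L2} applicable and pushes $k^{*m}$ into $L^{2}(G)$. The smallest odd integer satisfying both conditions is precisely the $m$ of the statement, and any smaller choice would fail one of the two requirements.
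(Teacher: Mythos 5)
Your proposal is correct and follows essentially the same route as the paper: symmetrize via $a=x*x^{\ast}$, pass to the odd power $y_{m}=|\lambda|^{2m}\mathbf{1}+k^{*m}$ with the spectral gap $\Delta_{m}$, use Lemma~\ref{lem:HY-L2} to land in $L^{2}(G)_{1}$, and recover $a^{-1}$ and $x^{-1}$ through $Q_{m}(k)$ and $x^{-1}=x^{\ast}*a^{-1}$. The only cosmetic difference is that you invert $y_{m}$ in $L^{2}(G)_{1}$ by a direct Plancherel computation (which also quietly covers the endpoint $p=2$, $m=1$) rather than formally invoking Theorem~\ref{pierw}, but this is exactly the ``Parseval'' step the paper performs, and your resulting bound is at least as sharp as the stated one.
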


\begin{proof}

\noindent\textbf{Comment (the endpoint $p=2$).} If $p=2$, then $p'=2$ and the choice in the statement gives $m=1$. In this endpoint case the steps based on Lemma~\ref{lem:HY-L2} (used below to force $k^{*m}\in L^{2}$) are unnecessary, since $k\in L^{2}(G)$ already and we may work directly with $y_{1}=|\lambda|^{2}1+k$.

Write $x=\lambda 1+f$. Since $\widehat f(\gamma)\to 0$ as $\gamma\to\infty$ and
$|\lambda+\widehat f(\gamma)|\ge\delta$ for all $\gamma$, we have $|\lambda|\ge\delta$.
Also $\|x\|_{L^{p}(G)_{1}}\le 1$ implies $|\lambda|\le 1$.

Set $a:=x*x^{\ast}$. Then $\widehat a(\gamma)=|\widehat x(\gamma)|^{2}\ge\delta^{2}$.
Expanding $a$ we obtain
\[
a=|\lambda|^{2}1+k,
\qquad
k:=\lambda f^{\ast}+\overline{\lambda}f+f*f^{\ast}\in L^{p}(G).
\]
By submultiplicativity in the unitization,
$\|a\|_{L^{p}(G)_{1}}\le \|x\|_{L^{p}(G)_{1}}^{2}\le 1$, hence
\begin{equation}\label{eq:Lp-k-bound}
|\lambda|^{2}+\|k\|_{p}\le 1.
\end{equation}
Moreover $\widehat k(\gamma)=|\lambda+\widehat f(\gamma)|^{2}-|\lambda|^{2}$ is real
for all $\gamma$.

Choose $m$ as in the statement and define
$y_{m}:=|\lambda|^{2m}1+k^{*m}$. By \eqref{eq:Lp-k-bound} and the same geometric-series
estimate as in Step~2 of Theorem~\ref{thm:remove-delta-third},
\[
\|y_{m}\|_{L^{p}(G)_{1}}\le (|\lambda|^{2}+\|k\|_{p})^{m}\le 1.
\]
Next, for every $\gamma\in\widehat G$,
\[
\widehat y_{m}(\gamma)=|\lambda|^{2m}+(\widehat k(\gamma))^{m}.
\]
Since $m$ is odd and $\widehat k(\gamma)\in\mathbb{R}$, the ``negative'' case is controlled
exactly as in the proof of Theorem~\ref{thm:remove-delta-third}: from
$|\lambda|^{2}+\widehat k(\gamma)=\widehat a(\gamma)\ge\delta^{2}$ we obtain
$|\widehat k(\gamma)|\le |\lambda|^{2}-\delta^{2}$ whenever $\widehat k(\gamma)\le 0$,
and hence
\[
|\widehat y_{m}(\gamma)|
\ge
|\lambda|^{2m}-(|\lambda|^{2}-\delta^{2})^{m}
\ge
\Delta_{m}.
\]
Thus $\inf_{\gamma}|\widehat y_{m}(\gamma)|\ge \Delta_{m}$.

We now pass to $L^{2}$. By the Hausdorff--Young reduction lemma (applied to $k$), the choice
$m\ge \lceil p'/2\rceil$ ensures $k^{*m}\in L^{2}(G)$, hence $y_{m}\in L^{2}(G)_{1}$.
Moreover, Lemma~\ref{lem:HY-L2} yields
$\|k^{*m}\|_{2}\le \|k\|_{p}^{m}$. Hence
\begin{align*}
\|y_{m}\|_{L^{2}(G)_{1}}
&=|\lambda|^{2m}+\|k^{*m}\|_{2}\\
&\le |\lambda|^{2m}+\|k\|_{p}^{m}\\
&\le (|\lambda|^{2}+\|k\|_{p})^{m}
= \|a\|_{L^{p}(G)_{1}}^{m}
\le \|x\|_{L^{p}(G)_{1}}^{2m}
\le 1.
\end{align*}
Applying the $p\le 2$ inversion estimate proved in Theorem \ref{pierw} (with $\delta$ replaced
by $\Delta_{m}$) we obtain by Parseval's identity
\[
\|y_{m}^{-1}\|_{L^{2}(G)_{1}}
\le
\frac{1}{\delta^{2m}}+\frac{(1-\Delta_{m})}{\Delta_{m}^{2}}.
\]
Since $m(G)=1$ and $p<2$, we have the continuous embedding $L^{2}(G)\hookrightarrow L^{p}(G)$ and
$\|h\|_{p}\le \|h\|_{2}$. Therefore
$\|y_{m}^{-1}\|_{L^{p}(G)_{1}}\le \|y_{m}^{-1}\|_{L^{2}(G)_{1}}$.

Finally we recover $a^{-1}$ and then $x^{-1}$ using the same polynomial identity as in
Step~4 of Theorem~\ref{thm:remove-delta-third}. Namely, with
\[
Q_{m}(k):=\sum_{j=0}^{m-1}(-1)^{j}|\lambda|^{2(m-1-j)}\,k^{*j}
\]
we have $(|\lambda|^{2}1+k)*Q_{m}(k)=|\lambda|^{2m}1+k^{*m}=y_{m}$ because $m$ is odd.
Hence $a^{-1}=Q_{m}(k)*y_{m}^{-1}$. From \eqref{eq:Lp-k-bound} we get
$\|Q_{m}(k)\|_{L^{p}(G)_{1}}\le (|\lambda|^{2}+\|k\|_{p})^{m-1}\le 1$, so
$\|a^{-1}\|_{L^{p}(G)_{1}}\le \|y_{m}^{-1}\|_{L^{p}(G)_{1}}$.
Since $x^{-1}=x^{\ast}*a^{-1}$ and $\|x^{\ast}\|_{L^{p}(G)_{1}}=\|x\|_{L^{p}(G)_{1}}\le 1$,
the same bound holds for $\|x^{-1}\|_{L^{p}(G)_{1}}$.
\end{proof}

\subsection{The case $p>2$}

For $p>2$ we have to deal with norms larger then the Hilbert space norm. We bound
the $L^{p}$-norm of the inverse by estimating its Fourier coefficients in $\ell^{p'}$
and then applying the inverse Hausdorff--Young inequality (with exponent $p'\in(1,2)$).

\begin{theorem}[Quantitative inversion in $L^{p}(G)_{1}$ for $p>2$]\label{lp2}
Fix $p>2$ and let $p':=\frac{p}{p-1}\in(1,2)$. Let $x=\lambda 1+f\in L^{p}(G)_{1}$ satisfy
\[
\|x\|_{L^{p}(G)_{1}}\le 1,
\qquad\text{and}\qquad
\inf_{\gamma\in\widehat G}|\widehat x(\gamma)|\ge \delta>0.
\]
Let $n$ be the smallest odd integer with $n\ge p-1$, and define
\[
\Delta_{n}:=\delta^{2n}\Bigl(1-(1-\delta^{2})^{n}\Bigr),
\qquad
c_{n}(\delta):=1-(1-\delta^{2})^{n}.
\]
Then $x$ is invertible in $L^{p}(G)_{1}$ and
\[
\|x^{-1}\|_{L^{p}(G)_{1}}
\le
\delta^{-2n}+\frac{\delta^{-4n}}{c_{n}(\delta)}.
\]
\end{theorem}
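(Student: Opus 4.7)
\emph{Plan.} I would follow the same symmetrization and odd-power scheme already used in Theorems~\ref{thm:remove-delta-third} and~\ref{lp1}; the only essentially new ingredient is the inversion step at the end. For $p>2$ one cannot reduce to Theorem~\ref{pierw} by passing through $L^{2}(G)$ (the continuous inclusion on the probability space $G$ goes the wrong way, $L^{p}\hookrightarrow L^{2}$), so I would invert the auxiliary element $y_{n}$ directly in $L^{p}(G)_{1}$ by estimating the Fourier transform of its ``remainder'' in $\ell^{p'}$ and then returning to $L^{p}$ via the inverse Hausdorff--Young inequality (Theorem~\ref{thm:HY}(ii)).

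\emph{Steps 1--2 (symmetrization and odd power).} Following the pattern of Theorem~\ref{thm:remove-delta-third}, I would set $a:=x\ast x^{\ast}=|\lambda|^{2}\mathbf 1+k$ with $k=\lambda f^{\ast}+\overline\lambda f+f\ast f^{\ast}$, so that $\widehat a=|\widehat x|^{2}\ge\delta^{2}$, $\widehat k$ is real-valued, $|\lambda|\ge\delta$, and $|\lambda|^{2}+\|k\|_{p}\le 1$. With $n$ as in the statement, form $y_{n}:=|\lambda|^{2n}\mathbf 1+k^{\ast n}$; submultiplicativity yields $\|y_{n}\|_{L^{p}(G)_{1}}\le 1$, and the odd-power analysis of $\widehat y_{n}(\gamma)=|\lambda|^{2n}+(\widehat k(\gamma))^{n}$ (handling the worst case $\widehat k(\gamma)<0$ via $|\widehat k(\gamma)|\le|\lambda|^{2}-\delta^{2}$, exactly as in Theorem~\ref{thm:remove-delta-third}) gives $\inf_{\gamma}|\widehat y_{n}(\gamma)|\ge\Delta_{n}$.

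\emph{Step 3 (direct inversion of $y_{n}$).} This is the core of the proof. Writing the candidate inverse as $y_{n}^{-1}=|\lambda|^{-2n}\mathbf 1+h$ and solving on the Fourier side forces
\[
\widehat h(\gamma)=-\frac{(\widehat k(\gamma))^{n}}{|\lambda|^{2n}\widehat y_{n}(\gamma)},
\qquad
|\widehat h(\gamma)|\le\frac{|\widehat k(\gamma)|^{n}}{|\lambda|^{2n}\Delta_{n}}.
\]
The numerical condition $n\ge p-1$ is equivalent to $np'\ge p$, and in particular $np'>2$, so one has both embeddings $\ell^{2}(\widehat G)\hookrightarrow\ell^{np'}(\widehat G)$ and (since $m(G)=1$) $L^{p}(G)\hookrightarrow L^{2}(G)$. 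Combined with Plancherel these give
\[
\|\widehat k\|_{\ell^{np'}}\le\|\widehat k\|_{\ell^{2}}=\|k\|_{L^{2}}\le\|k\|_{L^{p}}\le 1,
\]
whence $\|\widehat h\|_{\ell^{p'}}\le\|\widehat k\|_{\ell^{np'}}^{n}/(|\lambda|^{2n}\Delta_{n})\le\delta^{-4n}/c_{n}(\delta)$. The inverse Hausdorff--Young inequality at exponent $p'\in(1,2)$ then yields $h\in L^{p}(G)$ with $\|h\|_{L^{p}}\le\|\widehat h\|_{\ell^{p'}}$, so that $\|y_{n}^{-1}\|_{L^{p}(G)_{1}}\le\delta^{-2n}+\delta^{-4n}/c_{n}(\delta)$.

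\emph{Step 4 (recovery of $x^{-1}$).} Exactly as in Theorem~\ref{thm:remove-delta-third}, the odd-power identity $(|\lambda|^{2}\mathbf 1+k)\ast Q_{n}(k)=y_{n}$ with $Q_{n}(k):=\sum_{j=0}^{n-1}(-1)^{j}|\lambda|^{2(n-1-j)}k^{\ast j}$ and $\|Q_{n}(k)\|_{L^{p}(G)_{1}}\le(|\lambda|^{2}+\|k\|_{p})^{n-1}\le 1$ propagate the estimate to $\|a^{-1}\|_{L^{p}(G)_{1}}\le\|y_{n}^{-1}\|_{L^{p}(G)_{1}}$, and then $x^{-1}=x^{\ast}\ast a^{-1}$ together with $\|x^{\ast}\|=\|x\|\le 1$ transfers the same bound to $x^{-1}$. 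The main obstacle to anticipate is the absence of a forward Hausdorff--Young inequality for $p>2$; the key insight is that the exponent $n\ge p-1$ does double duty, simultaneously controlling the spectral gap of $y_{n}$ from below and forcing $(\widehat k)^{n}$ into $\ell^{p'}$, so that only the \emph{inverse} Hausdorff--Young inequality (which does hold at the conjugate exponent $p'<2$) is needed to close the argument.
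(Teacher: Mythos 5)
Your proposal is correct and follows essentially the same route as the paper: symmetrization $a=x\ast x^{\ast}$, the odd power $y_{n}=|\lambda|^{2n}\mathbf 1+k^{\ast n}$ with spectral gap $\Delta_{n}$, direct inversion of $y_{n}$ by estimating the Fourier transform of the remainder in $\ell^{p'}$ and applying inverse Hausdorff--Young, then recovery of $x^{-1}$ via $Q_{n}(k)$ and $x^{\ast}$. The only (harmless) local difference is how you get $\|(\widehat k)^{n}\|_{\ell^{p'}}\le 1$: you use Plancherel together with $L^{p}\hookrightarrow L^{2}$ and $\ell^{2}\subset\ell^{np'}$, whereas the paper uses the forward Hausdorff--Young inequality at exponent $p'$ and the embedding $\ell^{p/n}\subset\ell^{p'}$; both give the same final bound $\delta^{-2n}+\delta^{-4n}/c_{n}(\delta)$.
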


\begin{proof}
As in the previous proof, write $x=\lambda 1+f$ and set $a:=x*x^{\ast}=|\lambda|^{2}1+k$ with
$k\in L^{p}(G)$, $\widehat a(\gamma)=|\widehat x(\gamma)|^{2}\ge \delta^{2}$, and
$|\lambda|^{2}+\|k\|_{p}\le 1$. In particular $|\lambda|\ge\delta$ and $|\lambda|\le 1$.

Choose $n$ as in the statement and set $y_{n}:=|\lambda|^{2n}1+k^{*n}$. Exactly as before,
\[
\|y_{n}\|_{L^{p}(G)_{1}}\le (|\lambda|^{2}+\|k\|_{p})^{n}\le 1,
\qquad
\inf_{\gamma}|\widehat y_{n}(\gamma)|\ge \Delta_{n}.
\]
Write
\[
\widehat y_{n}(\gamma)=|\lambda|^{2n}(1+t(\gamma)),
\qquad
t(\gamma):=\Bigl(\frac{\widehat k(\gamma)}{|\lambda|^{2}}\Bigr)^{n}.
\]
Using the pointwise estimate
\[
|\widehat y_{n}(\gamma)|\ge |\lambda|^{2n}-(|\lambda|^{2}-\delta^{2})^{n}
=|\lambda|^{2n}\Bigl(1-\bigl(1-\delta^{2}/|\lambda|^{2}\bigr)^{n}\Bigr),
\]
we obtain for every $\gamma\in\widehat G$
\[
|1+t(\gamma)|
=\frac{|\widehat y_{n}(\gamma)|}{|\lambda|^{2n}}
\ge 1-\bigl(1-\delta^{2}/|\lambda|^{2}\bigr)^{n}
\ge 1-(1-\delta^{2})^{n}=c_{n}(\delta),
\]
where in the last step we used $|\lambda|\le 1$.
Consequently
\begin{align*}
(\widehat y_{n}(\gamma))^{-1}
&=|\lambda|^{-2n}\frac{1}{1+t(\gamma)}
=|\lambda|^{-2n}+b(\gamma),\\
b(\gamma)&:=-|\lambda|^{-2n}\frac{t(\gamma)}{1+t(\gamma)}.
\end{align*}
Since $p'\in(1,2)$, the Hausdorff--Young inequality implies
\begin{align*}
\|\widehat k\|_{\ell^{p}(\widehat G)}
&\le \|k\|_{L^{p'}(G)}
\le \|k\|_{L^{p}(G)}
\le 1.
\end{align*}
Therefore
\[
\|(\widehat k)^{n}\|_{\ell^{p/n}(\widehat G)}
=
\|\widehat k\|_{\ell^{p}(\widehat G)}^{n}
\le 1.
\]
Because $n\ge p-1$ we have $p/n\le p'$, hence $\ell^{p/n}\subset \ell^{p'}$ and
$\|(\widehat k)^{n}\|_{\ell^{p'}(\widehat G)}\le 1$. It follows that
\[
\|t\|_{\ell^{p'}(\widehat G)}
=
\Bigl\|\Bigl(\frac{\widehat k}{|\lambda|^{2}}\Bigr)^{n}\Bigr\|_{\ell^{p'}(\widehat G)}
\le
|\lambda|^{-2n}\|(\widehat k)^{n}\|_{\ell^{p'}(\widehat G)}
\le
\delta^{-2n}.
\]
Using the lower bound $|1+t|\ge c_{n}(\delta)$ we get
\[
\|b\|_{\ell^{p'}(\widehat G)}
\le
|\lambda|^{-2n}\,\frac{1}{c_{n}(\delta)}\,\|t\|_{\ell^{p'}(\widehat G)}
\le
\frac{\delta^{-4n}}{c_{n}(\delta)}.
\]
Now apply the inverse Hausdorff--Young inequality (with exponent $p'\in(1,2)$):
the inverse Fourier series map sends $\ell^{p'}(\widehat G)$ to $L^{p}(G)$ boundedly, and
\[
\| \mathcal{F}^{-1}b\|_{L^{p}(G)}\le \|b\|_{\ell^{p'}(\widehat G)}.
\]
Thus $y_{n}^{-1}=|\lambda|^{-2n}1+h$ with $\|h\|_{p}\le \|b\|_{\ell^{p'}}$, and so
\[
\|y_{n}^{-1}\|_{L^{p}(G)_{1}}
\le
|\lambda|^{-2n}+\|h\|_{p}
\le
\delta^{-2n}+\frac{\delta^{-4n}}{c_{n}(\delta)}.
\]

As in the $1<p<2$ case, we recover $a^{-1}$ and $x^{-1}$ via the polynomial identity.
With $Q_{n}(k):=\sum_{j=0}^{n-1}(-1)^{j}|\lambda|^{2(n-1-j)}k^{*j}$ we have
$a*Q_{n}(k)=y_{n}$ since $n$ is odd, hence $a^{-1}=Q_{n}(k)*y_{n}^{-1}$ and
$\|Q_{n}(k)\|_{L^{p}(G)_{1}}\le (|\lambda|^{2}+\|k\|_{p})^{n-1}\le 1$.
Therefore $\|a^{-1}\|_{L^{p}(G)_{1}}\le \|y_{n}^{-1}\|_{L^{p}(G)_{1}}$, and finally
$x^{-1}=x^{\ast}*a^{-1}$ with $\|x^{\ast}\|_{L^{p}(G)_{1}}\le 1$ gives the same bound for
$\|x^{-1}\|_{L^{p}(G)_{1}}$.
\end{proof}
\section{Concluding remarks and open problems}
\begin{enumerate}
    \item The more general problem than controlled inversion is described in \cite{Nikolski1999} in terms of Bezout equations. In brief, we have $n$ elements $(x_{k})_{k=1}^{n}\in A^{1}$ satisfying 
    \begin{equation*}
        \sum_{k=1}^{n}\|x_{k}\|_{A^{1}}^{2}\leq 1\text{ and }\inf_{\gamma\in\widehat{G}}\sum_{k=1}^{n}|\widehat{x_{k}}(\gamma)|^{2}\geq\delta^{2}>0.
    \end{equation*}
    We ask whether there exists a sequence of elements $(y_{k})_{k=1}^{n}$ in $A^{1}$ such that
    \begin{equation*}
        \sum_{k=1}^{n}x_{k}y_{k}=\mathbf{1}.
    \end{equation*}
    If the answer is affirmative, then the second question is the estimate of the form
    \begin{equation*}
        \sum_{k=1}^{n}\|y_{k}\|^{2}\leq F(\delta),
    \end{equation*}
    where $F$ is some function that depends only on $\delta>0$. However, since both considered classes of algebras are symmetric, Lemma 1.4.2 in \cite{Nikolski1999} gives a positive solution to this problem basing on Theorem \ref{thm:remove-delta-third}, Theorem \ref{lp1} and Theorem \ref{lp2}.
    \item The definition of algebras $A_{p}(G)$ is not restricted to compact abelian groups and the norm-controlled inversion problem is meaningful for all locally compact abelian groups $G$. Unfortunately, our methods cannot be extended even to the simplest possible case of $G=\mathbb{R}$. The main issue is very basic: if $G$ is not compact, then the norms $\|\cdot\|_{p}$ and $\|\cdot\|_{q}$ are incomparable for distinct $p$ and $q$ which prevents the use of most arguments in this paper.
\end{enumerate}

\end{document}